\documentclass[11pt]{amsart}
\usepackage{cases}

\topmargin 0mm \evensidemargin 15mm \oddsidemargin 15mm \textwidth
140mm \textheight 230mm

\theoremstyle{plain}
\newtheorem{theorem}                {Theorem}      [section]
\newtheorem*{theorem*}                {Theorem \ref{thm:appl}}
\newtheorem{proposition}  [theorem]  {Proposition}
\newtheorem{corollary}    [theorem]  {Corollary}
\newtheorem{lemma}        [theorem]  {Lemma}
\newtheorem{conjecture}   {Conjecture}

\theoremstyle{definition}

\newtheorem{remark}       [theorem]  {Remark}
\newtheorem{definition}   [theorem]  {Definition}

\newcommand{\la}{\langle}
\newcommand{\ra}{\rangle}

\DeclareMathOperator{\trace}{trace} 

\DeclareMathOperator{\Div}{div} 
 \DeclareMathOperator{\riem}{Riem}
\DeclareMathOperator{\ricci}{Ricci}

\DeclareMathOperator{\cst}{constant}
 \DeclareMathOperator{\Hess}{Hess}
 
\DeclareMathOperator{\grad}{grad}
\DeclareMathOperator{\Int}{int}
\numberwithin{equation}{section}

\begin{document}

\title[Bochner-Simons formulas]{Bochner-Simons formulas and the rigidity of biharmonic submanifolds}

\author{Dorel~Fetcu}
\author{Eric~Loubeau}
\author{Cezar~Oniciuc}

\address{Department of Mathematics and Informatics\\
Gh. Asachi Technical University of Iasi\\
Bd. Carol I, 11 \\
700506 Iasi, Romania} \email{dfetcu@math.tuiasi.ro}

\address{D{\'e}partement de Math{\'e}matiques \\
LMBA, UMR 6205 \\
Universit{\'e} de Bretagne Occidentale \\
6, avenue Victor Le Gorgeu \\
CS 93837, 29238 Brest Cedex 3, France}
\email{Eric.Loubeau@univ-brest.fr}

\address{Faculty of Mathematics\\ Al. I. Cuza University of Iasi\\
Bd. Carol I, 11 \\ 700506 Iasi, Romania} \email{oniciucc@uaic.ro}

\subjclass[2010]{53C42, 53C24, 53C21}

\keywords{Stress-energy tensor, constant mean curvature hypersurfaces, biharmonic submanifolds, biconservative submanifolds, real space forms}

\begin{abstract} We find some integral formulas of Simons and Bochner type and use them to study biharmonic and biconservative submanifolds in space forms. We obtain rigidity results that in the biharmonic case represent partial answers to two well-known conjectures on such submanifolds in spheres.
\end{abstract}

\maketitle

\section{Introduction}

The rich history of using tensorial formulas to understand the geometry of hypersurfaces in Riemannian manifolds goes back to the $1968$ seminal paper \cite{JS} by J. Simons, where, after finding the expression of the Laplacian of the squared norm of the second fundamental form of a minimal submanifold, which in the (simpler) case of minimal hypersurfaces $M^m$ in $\mathbb{S}^{m+1}$ is
$$
\frac{1}{2}\Delta|A|^2=-|\nabla A|^2-|A|^2(m-|A|^2),
$$
where $A$ is the shape operator, the author uses it to prove a very important rigidity result for compact minimal submanifolds of the Euclidean sphere. 

Simons' results were then generalized to the case of constant mean curvature (CMC) hypersurfaces in space forms by K. Nomizu and B. Smyth \cite{NS} in $1969$, and by J. Erbacher \cite{E} in $1971$ and B. Smyth \cite{BS} in $1973$ to the even more general case of submanifolds with parallel mean curvature vector field (PMC) in space forms. 

In $1977$, S.-Y. Cheng and S.-T. Yau \cite{CY} proved a general Simons type formula for Codazzi tensors, i.e., symmetric $(1,1)$-tensors $S$ on an $m$-dimensional Riemannian manifold $M$ and satisfying the classical Codazzi equation $(\nabla_XS)Y=(\nabla_YS)X$:
\begin{align}\label{eq:CY}
\frac{1}{2}\Delta|S|^2=-|\nabla S|^2-\sum_{i=1}^{m}\langle S,\Hess(\trace S)\rangle-\frac{1}{2}\sum_{i,j=1}^{m}R_{ijij}(\lambda_i-\lambda_j)^2,
\end{align}
where $\lambda_i$ are the eigenvalues of $S$ and $R_{ijkl}$ are the components of the Riemannian curvature of $M$. Using this equation with $S=A$ one reobtains Nomizu and Smyth's result as well as the Simons' one, after rewriting the last term. 

However, when $S$ fails to satisfy the Codazzi condition, Formula \eqref{eq:CY} ceases to work. In this case, a very useful tool proved to be a non-linear Bochner type formula in a paper from $1993$ by N. Mok, Y.-T. Siu, and S.-K. Yeung \cite{MSY}. We will return with more details about this formula in the fourth section of our paper where we will use a similar one to study the geometry of biharmonic and biconservative hypersurfaces in space forms, especially in the Euclidean sphere. Note that, for compact CMC hypersurfaces in space forms this formula again leads to the Nomizu-Smyth equation of \cite{NS}, while when working with biharmonic, or, more generally, biconservative surfaces in a Riemannian manifold, and a non-Codazzi tensor, one recovers a result in \cite{LO}.

A biharmonic map $\phi:M\rightarrow N$ between two Riemannian manifolds is a critical point of the bienergy functional
$$
E_2:C^{\infty}(M,N)\rightarrow\mathbb{R},\quad E_{2}(\phi)=\frac{1}{2}\int_{M}|\tau(\phi)|^{2}\ dv,
$$
where $M$ is compact and $\tau(\phi)=\trace\nabla d\phi$ is the tension field of $\phi$. The corresponding Euler-Lagrange equation, also known as the biharmonic equation, was obtained by G. Y. Jiang \cite{J} in $1986$:
\begin{equation}\label{eq:Jiang}
\tau_2(\phi)=-\Delta\tau(\phi)-\trace R^N(d\phi,\tau(\phi))d\phi=0,
\end{equation}
where $\tau_{2}(\phi)$ is the bitension field of $\phi$, $\Delta=-\trace(\nabla^{\phi})^2 =-\trace(\nabla^{\phi}\nabla^{\phi}-\nabla^{\phi}_{\nabla})$ is the rough Laplacian defined on sections of $\phi^{-1}(TN)$ and $R^N$ is the curvature tensor of $N$, given by $R^N(X,Y)Z=[\bar\nabla_X,\bar\nabla_Y]Z-\bar\nabla_{[X,Y]}Z$. 

Since any harmonic map is also biharmonic, the objective is to study biharmonic maps that are not harmonic. These maps are called proper-biharmonic. When $\phi:M\rightarrow N$ is biharmonic and also an isometric immersion, $M$ is called a biharmonic submanifold of $N$. 

Biharmonic maps were introduced in $1964$ by J. Eells and J. H. Sampson \cite{ES} as a generalization of harmonic maps and nowadays this topic represents a well-established and very dynamic research direction in modern Differential Geometry. In Euclidean spaces, B.-Y. Chen \cite{C} proposed an alternative definition of biharmonic submanifolds. Chen's definition coincides with the previous one when the ambient space is $\mathbb{E}^n$. He has also conjectured that in $\mathbb{E}^n$ there are no proper-biharmonic submanifolds.

When the ambient space has (constant) non-positive sectional curvature all known results have suggested a similar conjecture called the Generalized Chen Conjecture (see \cite{CMO1,M,GNU,Ou}).

A special attention has been paid to biharmonic submanifolds in spheres and articles like \cite{BMO,CMO1,CMO,JHC} led to the following two conjectures.

\begin{conjecture}[\cite{BMO}]
Proper-biharmonic submanifolds of $\mathbb{S}^{n}$ are CMC.
\end{conjecture}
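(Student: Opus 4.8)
Being an open conjecture, what can realistically be targeted here is a partial answer under an extra hypothesis --- which is precisely what the integral formulas announced above are tailored to produce --- so I will describe the attack in that spirit.

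Let $\phi\colon M^{m}\to\mathbb{S}^{n}$ be a compact proper-biharmonic submanifold, with second fundamental form $B$, shape operator $A$, mean curvature vector field $H$, and $f=|H|$. The first step is to split the biharmonic system \eqref{eq:Jiang} into its tangential part, the \emph{biconservative} equation
\[
2\,\trace A_{\nabla^{\perp}_{(\cdot)}H}(\cdot)+\tfrac{m}{2}\,\grad f^{2}=0,
\]
and its normal part, which links $\Delta^{\perp}H$, $\trace B(\cdot,A_{H}\cdot)$ and $mH$. The goal is then to show $\grad f\equiv 0$.

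The plan is to run a Bochner--Simons argument on a well-chosen symmetric $(1,1)$-tensor. In the hypersurface case $n=m+1$ the shape operator $A$ is a Codazzi tensor, so I would apply the Cheng--Yau identity \eqref{eq:CY} with $S=A$, integrate $\frac{1}{2}\Delta|A|^{2}$ over the compact manifold, integrate the Hessian term by parts using $\di A=\grad(\trace A)$, and rewrite the curvature term through the Gauss equation; this should give an integral identity of the form
\[
\int_{M}\Bigl(|\nabla A|^{2}-m^{2}|\grad f|^{2}+\mathcal{Q}(\lambda_{1},\dots,\lambda_{m},f)\Bigr)\,dv=0,
\]
with $\mathcal{Q}$ algebraic in the principal curvatures $\lambda_{i}$ and in $f$. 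In higher codimension $A_{H}$ (or a renormalised second fundamental form) is no longer Codazzi, so I would replace \eqref{eq:CY} by a non-linear Bochner formula of Mok--Siu--Yeung type, at the price of an extra non-negative defect. I would then bring in the two remaining ingredients. The biconservative equation, on the open set $\{\grad f\neq 0\}$, forces $\grad f$ to be an eigendirection of a shape operator, pinning down one principal curvature; a Cauchy--Schwarz bound on the others then yields $|B|^{2}\geq c(m)\,f^{2}$ there. The normal equation relates $\int_{M}|B|^{2}f^{2}$ to $\int_{M}|\grad f|^{2}$. Combined with a refined Kato inequality for $|\nabla A|^{2}$, the aim is to make the integrand bounded below by a positive multiple of $|\grad f|^{2}$, forcing $\grad f\equiv 0$.

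The hard part will be controlling $\mathcal{Q}$ and the sign of the Gauss-curvature term: for an unconstrained mean curvature these are not sign-definite, which is exactly why the conjecture remains open. I expect the argument to close only after some a priori control is imposed --- $f$ bounded away from $0$, or from above, or a bound on $|B|^{2}$, or low codimension, or the surface case $m=2$, where the identity collapses to the estimate of \cite{LO}; for compact CMC hypersurfaces the same identity degenerates to the Nomizu--Smyth equation of \cite{NS}. Removing every such extra hypothesis is the genuine obstacle.
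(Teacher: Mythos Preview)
The statement you were asked to prove is not a theorem in the paper but an open \emph{conjecture} (Conjecture~1, attributed to \cite{BMO}); the paper offers no proof of it. You correctly recognised this and shifted to sketching a strategy for partial results, which is the honest response.

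That said, it is worth noting where your sketch and the paper's actual work diverge. The paper's partial results address almost exclusively Conjecture~\ref{conj2} (the \emph{hypersurface} case), not the general-codimension Conjecture~1 you are discussing. More importantly, the paper's key technical choice is to run the Bochner--Simons machinery not on $A$ or $A_H$ directly, but on the \emph{biharmonic stress-energy tensor} $S_2=-\tfrac{m^2}{2}f^2 I+2mfA$; the advantage is that $\Div S_2$ vanishes identically for any biharmonic (or biconservative) immersion, so the $|\Div S|^2$ term in the Bochner identity \eqref{eq:MSY} disappears automatically, whereas with $S=A$ one has $\Div A=m\grad f$, producing exactly the $m^2|\grad f|^2$ term you wrote down and must then fight against. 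Your plan to apply Cheng--Yau to $A$ is closer to the classical Nomizu--Smyth route and will require the extra hypotheses you anticipate; the paper's $S_2$-based formulas (Proposition~\ref{p:delta}, Theorem~\ref{th:1}, Proposition~\ref{p:MSY}) package the biharmonic/biconservative condition more efficiently, but even so the paper still needs side assumptions (constant scalar curvature, $\riem^M\ge 0$, $m\le 10$) to close the argument. Your final paragraph accurately identifies the obstruction: without such hypotheses the curvature and algebraic terms are not sign-definite, and the conjecture remains open.
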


\begin{conjecture}[\cite{BMO}]\label{conj2}
The only proper-biharmonic hypersurfaces of $\mathbb{S}^{m+1}$ are (open parts of) either hyperspheres $\mathbb{S}^m(1/\sqrt{2})$ or standard products of spheres $\mathbb{S}^{m_1}(1/\sqrt{2})\times\mathbb{S}^{m_2}(1/\sqrt{2})$, $m_1+m_2=m$, $m_1\neq m_2$.
\end{conjecture}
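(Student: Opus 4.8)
The statement is one of the two outstanding conjectures just recalled, so I will describe the strategy I would pursue toward it---realistically, toward the conditional rigidity statements that the integral machinery of this paper can be expected to yield. Throughout I would assume $M$ compact; the non-compact case seems to lie outside the reach of these methods. The plan is to split the problem into two halves: (i) show that the hypersurface is CMC, which reduces Conjecture~\ref{conj2} to the first conjecture above together with the classification of CMC proper-biharmonic hypersurfaces; and (ii) carry out that classification, at least under a mild extra hypothesis.

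For (i), recall that a proper-biharmonic hypersurface $\phi\colon M^m\to\mathbb{S}^{m+1}$ with mean curvature function $f$ and shape operator $A$ satisfies in particular the biconservative (tangential) equation $2A(\grad f)+mf\,\grad f=0$, so that on $\{\grad f\neq 0\}$ the vector $\grad f$ is a principal direction with principal curvature $-mf/2$, while the normal equation reads $\Delta f=(m-|A|^2)f$. Since $M$ is a hypersurface in a space form, $A$ is automatically a Codazzi tensor, so Cheng--Yau's identity \eqref{eq:CY} applies with $S=A$; feeding in the Gauss equation $R_{ijij}=1+\lambda_i\lambda_j$ and $\trace A=mf$ yields a Simons type formula of the schematic shape
\[
\tfrac12\Delta|A|^2=-|\nabla A|^2-\langle A,\Hess(mf)\rangle-m|A|^2+m^2f^2-mf\,\trace(A^3)+|A|^4 .
\]
The idea is then to integrate over the compact $M$, remove the Hessian term by an integration by parts---using that for a Codazzi tensor $\Div A=d(\trace A)$, which turns $\int\langle A,\Hess(mf)\rangle$ into $-m^2\int|\grad f|^2$---and use the two biharmonic equations to rewrite the remaining curvature terms; under a suitable pinching hypothesis on $|A|^2$ (the conjectured examples all have $|A|^2=m$, so a bound such as $|A|^2\le 2m$, or a curvature condition making the last term of \eqref{eq:CY} sign-definite, is the natural candidate) every term is forced to vanish, in particular $\nabla A=0$, and hence $f$ is constant. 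A more flexible route, and the one suggested by the title, is to replace \eqref{eq:CY} by the non-linear Bochner type inequality of Mok--Siu--Yeung applied to the traceless operator $A-f\,\id$: this tensor is not Codazzi, but its failure to be Codazzi is governed precisely by $\grad f$ through the biconservative equation, so the correction terms stay under control and one avoids treating $\Hess f$ by hand.

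For (ii), once $f$ is constant, proper-biharmonicity forces $|A|^2=m$. If one additionally knows that $\nabla A=0$ (as produced by the argument above), or more modestly that $M$ has at most two distinct principal curvatures, then $M$ has constant principal curvatures; imposing $|A|^2=m$ and non-minimality on the finitely many possibilities leaves exactly $\mathbb{S}^m(1/\sqrt2)$ (the totally umbilical case, $f=1$) and $\mathbb{S}^{m_1}(1/\sqrt2)\times\mathbb{S}^{m_2}(1/\sqrt2)$ with $m_1\neq m_2$ (the case $m_1=m_2$ being minimal), which is the content of Conjecture~\ref{conj2}.

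I expect the main obstacles to be the following. In step (i), controlling $\langle A,\Hess(mf)\rangle$ and $\trace(A^3)$ with no a priori bound on $|A|^2$ is the crux: this is exactly where a pinching or curvature hypothesis has to be imposed, and it explains why only partial answers to the conjecture can be expected from these formulas. In step (ii), the genuinely hard point---and the reason the full conjecture remains open---is ruling out CMC proper-biharmonic hypersurfaces with three or more distinct principal curvatures; the Simons/Bochner identities pin down $|A|^2=m$ but, absent extra information (constancy of the higher symmetric functions of $A$, a dimension restriction, or a bound on the number of principal curvatures), do not by themselves determine the shape operator.
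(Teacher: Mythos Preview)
You are right that the statement is an open conjecture and that the paper only proves conditional versions; your two-step outline (first force CMC, then classify) is also the paper's overall architecture. However, the execution differs in two substantive ways.

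First, the paper's principal object is not $A$ or $A-f\,\id$ but the biharmonic stress-energy tensor $S_2=-\tfrac{m^2}{2}f^2 I+2mfA$. In Section~3 the authors compute $\tfrac12\Delta|S_2|^2$ from a general formula for $\Delta S_2$ (Theorem~\ref{thm:LO}), and after several simplifications obtain an identity whose integral, under the extra hypothesis of \emph{constant scalar curvature} together with $\riem^M\ge 0$, forces $\nabla A=0$ directly (so CMC and the classification follow in one stroke). Your proposed route via Cheng--Yau applied to $A$ itself is closely related---indeed $|S_2|^2$ is a polynomial in $f^2$ and $|A|^2$---but the paper's packaging through $S_2$ absorbs the $\tau_2^\top$ terms more cleanly, and the decisive extra hypothesis is constant $s$, not a pinching of $|A|^2$.

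Second, in Section~4 the Mok--Siu--Yeung-type formula is stated for an arbitrary symmetric $(1,1)$-tensor (Proposition~\ref{p:MSY}) and then applied both to $S_2$ (non-Codazzi; this yields Proposition~\ref{p:MSY1}) and to $A_0=A$ (which \emph{is} Codazzi, so $W=0$). In the latter case the crucial extra ingredient---absent from your sketch---is the inequality
\[
|\nabla A_0|^2\ \ge\ \frac{m^2(m+26)}{4(m-1)}\,|\grad h|^2,
\]
which is obtained from the biconservative equation $A_0(\grad h)=-\tfrac{m}{2}h\,\grad h$ by an orthonormal-frame argument (Lemma~\ref{l:ineq}). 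Plugging this into the integrated Bochner identity gives $\int\langle T,A_0\rangle\ge \tfrac{3m^2(10-m)}{4(m-1)}\int|\grad h|^2$, whence CMC and $\nabla A_H=0$ follow for $m\le 10$ under $\riem^M\ge 0$ (Theorem~\ref{thm:pmc}); the case $m=10$ requires a separate and rather delicate analysis of the equality case. Your suggestion to run the Bochner argument on the non-Codazzi tensor $A-f\,\id$ is a reasonable variant, but the paper shows that working with the Codazzi tensor $A$ directly, supplemented by the gradient inequality above, is what actually closes the argument---and it is this inequality, not a pinching of $|A|^2$, that produces the dimension restriction.
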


The second conjecture remains difficult to prove even if we assume that the hypersurface is also CMC and compact. It actually is a problem of broader interest as a CMC hypersurface $M^m$ in $\mathbb{S}^{m+1}$ is biharmonic if and only if the squared norm of its shape shape operator is constant and equal to $m$ (see \cite{BMO,O}). Therefore, CMC hypersurfaces with $|A|^2=m$ are biharmonic and their classification is a natural step after S.~S.~Chern, M.~do~Carmo, and S.~Kobayashi studied minimal hypersurfaces with $|A|^2=m$ in \cite{CdCK} (for more details see also \cite{AdC}).

The most recent result to support these two conjectures were obtained by S. Maeta and Y. Luo in \cite{LM} and by S. Maeta and Y.-L. Ou in \cite{MO}. In this last article the authors proved that any compact proper-biharmonic hypersurface with constant scalar curvature in the Euclidean sphere has constant mean curvature. However, they did not proved that such a hypersurface is necessarily one of Conjecture \ref{conj2}.

Let $\phi:M\rightarrow (N,h)$ be a fixed map, where $M$ is compact and $h$ is a Riemannian metric on $N$. Then think of $E_2$ as a functional on the set of all Riemannian metrics on $M$. Critical points of this new functional are Riemannian metrics characterized by the vanishing of the stress-energy tensor $S_2$. This tensor satisfies
$$
\Div S_2=-\langle\tau_2(\phi),d\phi\rangle.
$$
For a submanifold $M$ in $N$, if $\Div S_2=0$, then $M$ is called biconservative and it is characterized by the fact that the tangent part of its bitension field vanishes.

This paper deals mainly with Conjecture \ref{conj2} under additional geometric hypotheses, and is organized as follows. First present a general collection of known (and one new) results on biharmonic and biconservative submanifolds and the stress-energy tensor of the bienergy. In Section $3$, we compute the Laplacian of the squared norm of the tensor $S_2$ for any hypersurface in a real space form and deduce a rigidity result for compact biconservative hypersurfaces with constant scalar curvature. It turns out however that this situation is less rigid than the biharmonic case as we find more examples than those of Conjecture \ref{conj2}. In the fourth section, we obtain a new general integral formula for tensors, apply it to $S_2$, and show that compact biconservative submanifolds with parallel normalized mean curvature vector field (PNMC) in space space forms of dimension less than ten and non-negative sectional curvature must be CMC. As a consequence, in dimension less than ten, we obtain a similar result to Corollary \ref{cor3final} replacing the scalar-curvature condition with nowhere vanishing of the mean curvature. 

\noindent {\bf Conventions.} We work in the smooth category and assume manifolds to be connected and without boundary. On compact Riemannian manifolds we consider the canonical Riemannian measure.

\section{Preliminaries}

In this section we briefly recall basic results on biharmonic and biconservative submanifolds and a general formula for the Laplacian of the biharmonic stress-energy tensor. 

The stress-energy tensor associated to a variational problem, first described by D.~Hilbert in \cite{H}, is a symmetric $2$-covariant or $(1,1)$-tensor $S$ conservative, i.e., $\Div S=0$, at critical points.

To study harmonic maps, P.~Baird and J.~Eells \cite{BE} (cf. also \cite{S}) introduced the tensor
$$
S=\frac{1}{2}|d\phi|^2g-\phi^{\ast}h
$$
for maps $\phi:(M,g)\rightarrow (N,h)$ and showed that $S$ satisfies the equation
$$
\Div S=-\langle\tau(\phi),d\phi\rangle,
$$
hence $\Div S$ vanishes when $\phi$ is harmonic. When $\phi:M\to N$ is an arbitrary isometric immersion, $\tau(\phi)$ is normal and therefore $\Div S=0$.

The stress-energy tensor $S_2$ of the bienergy, introduced in \cite{J} and studied in \cite{CMOP, FNO, Fu,LMO,MOR,MOR2,N}, is
\begin{align*}
S_2(X,Y)=&\frac{1}{2}|\tau(\phi)|^2\langle X,Y\rangle+\langle d\phi,\nabla\tau(\phi)\rangle\langle X,Y\rangle\\&-\langle d\phi(X),\nabla_Y\tau(\phi)\rangle-\langle d\phi(Y),\nabla_X\tau(\phi)\rangle
\end{align*}
and satisfies
$$
\Div S_2=-\langle\tau_2(\phi),d\phi\rangle.
$$
If $\phi:M\to N$ is an isometric immersion, then $(\Div S_2)^{\sharp}=-\tau_2(\psi)^{\top}$ and, unlike the harmonic case, $\Div S_2$ does not necessarily vanish.

\begin{definition} A submanifold $\phi:M\rightarrow N$ of a Riemannian manifold $N$ is called biconservative if $\Div S_2=0$, i.e., $\tau_2(\phi)^{\top}=0$.
\end{definition}

For hypersurfaces of space forms the biharmonic stress-energy tensor is parallel whenever the shape operator is so.

\begin{proposition}
Let $\phi:M^m\rightarrow N^{m+1}(c)$ be a non-minimal hypersurface. Then $\nabla S_2=0$ if and only if $\nabla A=0$.
\end{proposition}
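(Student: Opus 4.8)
The plan is to write everything out explicitly in terms of the shape operator $A$ and the mean curvature function $f=(\trace A)/m$ of the hypersurface $\phi:M^m\to N^{m+1}(c)$, and then read off the equivalence $\nabla S_2=0\Leftrightarrow\nabla A=0$ by a direct comparison of the two tensors. For a hypersurface, the tension field is $\tau(\phi)=mf\,\eta$ where $\eta$ is the unit normal, and $\nabla^\phi_X\tau(\phi)=-mf\,A(X)+m(Xf)\eta$ by the Weingarten formula. Substituting this into the definition of $S_2$ one finds that the tangential part contributes a term proportional to $f\,A$ and the normal derivatives contribute $(Xf)$-terms, so that $S_2$ takes the schematic form $S_2=\alpha(f)\id + \beta f\,A$ for explicit scalar functions $\alpha(f)$ built from $f$ and its derivatives (here I use that $\langle d\phi,\nabla\tau(\phi)\rangle$ reduces to a multiple of $|\grad f|^2$-free expression involving $\Delta f$ and $f^2|A|^2$), with $\beta$ a nonzero numerical constant.

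Once $S_2$ is written as $S_2 = \gamma\,\id + 2m^2 f^2\,A$ for an appropriate scalar function $\gamma$ on $M$ (I will compute $\gamma$ precisely; its exact form is not essential for the argument), the first step is the easy direction: if $\nabla A=0$, then $A$ is parallel, hence all its eigenvalues are constant, hence $|A|^2$ is constant; a parallel shape operator also forces (via the Codazzi equation in a space form and a standard argument) the mean curvature $f$ to be constant, so $\gamma$ is constant and $\nabla S_2 = (\nabla\gamma)\id + 2m^2 f^2\,\nabla A = 0$. The converse is the substantive direction. Assuming $\nabla S_2=0$ and differentiating $S_2 = \gamma\,\id + 2m^2 f^2\,A$ gives, for every $X$,
\[
0 = (X\gamma)\,\id + 4m^2 f(Xf)\,A + 2m^2 f^2\,(\nabla_X A).
\]
Taking the trace of this $(1,1)$-tensor identity eliminates $\nabla_X A$ in a controlled way (since $\trace(\nabla_X A)=m\,Xf$) and yields a relation between $X\gamma$, $f(Xf)$ and $mf^2\,Xf$; feeding this back into the displayed equation isolates $f^2\,\nabla_X A$ as a multiple of $(\id - \tfrac{1}{m}(\trace)\!\cdot)$ applied to $A$, i.e. as a multiple of the traceless part of $A$ times $(Xf)$. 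The conclusion $\nabla A=0$ then follows on the open dense set where $f\neq 0$ (nonemptiness of this set is exactly the non-minimality hypothesis), after which one argues by continuity, or else one shows directly that $f$ must be constant, at which point the relation collapses to $f^2\,\nabla_X A=0$ and hence $\nabla A=0$ everywhere.

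The main obstacle I anticipate is bookkeeping rather than conceptual: getting the scalar coefficient $\gamma$ and the precise constant in front of $f^2 A$ correct, and then handling the set $\{f=0\}$ cleanly. In particular, after taking traces one must be careful that the resulting identity genuinely forces $\nabla_X A$ to vanish and does not merely constrain its trace-free part in a direction-dependent way; resolving this likely requires also using the Codazzi equation $(\nabla_X A)Y=(\nabla_Y A)X$ for hypersurfaces of a space form to promote ``traceless part parallel in each direction'' to ``$A$ parallel.'' A secondary subtlety is that the argument produces $\nabla A=0$ a priori only on $\{f\neq 0\}$, so one finishes by showing $f$ has no interior zeros under the stated hypotheses, or by a limiting argument using that $M$ is connected; either way the non-minimality assumption is what makes the division by $f^2$ legitimate and is therefore essential.
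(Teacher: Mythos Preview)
Your explicit form for $S_2$ is off: for a hypersurface in a space form one has $S_2=-\tfrac{m^2}{2}f^2\,\id+2mf\,A$, so the coefficient of $A$ is $2mf$, not $2m^2f^2$. This matters, because with the correct form the covariant derivative reads
\[
\nabla_X S_2=-m^2f(Xf)\,\id+2m(Xf)\,A+2mf\,\nabla_XA,
\]
and taking the trace of $\nabla_X S_2=0$ (using $\trace(\nabla_XA)=m\,Xf$) gives exactly
\[
m^2(4-m)\,f\,(Xf)=0.
\]
For $m\neq4$ this yields $\grad(f^2)=0$, so $f^2$ is a nonzero constant by non-minimality; then $\grad f=0$ and the displayed equation collapses to $2mf\,\nabla_XA=0$, hence $\nabla A=0$. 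This is essentially the argument of \cite{LMO}, and the paper itself remarks that the case $m\neq4$ was already known by another method. But for $m=4$ the trace identity is vacuous, and on $\{f\neq0\}$ the equation only gives $f\,\nabla_XA=(Xf)\bigl(2f\,\id-A\bigr)$ without forcing $Xf=0$; moreover $2f\,\id-A$ is \emph{not} a multiple of the traceless part $A-f\,\id$ of $A$, so your ``feed back'' step does not isolate what you claim. Your parenthetical that Codazzi is ``likely'' needed is the right instinct but is not a proof, and nothing in your sketch explains why $\nabla A$ must vanish when $m=4$.

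The paper's proof supplies precisely this missing piece (and in fact treats all $m$ uniformly): one works in an eigenframe of $A$ on the open dense set where the number of distinct principal curvatures is locally constant, uses the symmetry of $\nabla A$ together with $\nabla S_2=0$ to obtain $(2\lambda_j-mf)\,E_if=0$ for $i\neq j$, and then exploits the fact that $\nabla S_2=0$ forces the eigenvalues $-\tfrac{m^2}{2}f^2+2mf\lambda_i$ of $S_2$, and hence $|S_2|^2$, to be \emph{constant} on $M$. Combining these constraints produces a nontrivial polynomial equation in $f^2$ with constant coefficients, contradicting $\grad f\neq0$ (with a separate short umbilicity argument when $m=2$). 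The key idea you are missing is the use of the constancy of the spectrum of $S_2$, not merely the pointwise vanishing of $\nabla_X S_2$, to manufacture an algebraic constraint on $f$; your trace-and-substitute approach cannot see this and collapses exactly at $m=4$.
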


\begin{proof} First assume that $\nabla A=0$. It then easily follows that the mean curvature function $f=(1/m)\trace A$ is a non-zero constant. Let $H=f\eta=(1/m)\tau(\phi)$ be the mean curvature vector field of $M$, where $\eta$ is the unit normal vector field. Since for a hypersurface $S_2=-(m^2/2)f^2I+2mfA$, one obtains $\nabla S_2=0$. 

Assume now that $\nabla S_2=0$. Denote by $W$ the set of all points of $M$ where the number of distinct principal curvatures is locally constant. This subset is open and dense in $M$. On each connected component of $W$, which is also open in $M$, the principal curvatures are smooth functions and the shape operator $A$ is (locally) diagonalizable.

We will work on such a connected component $W_0$ of $W$ and prove that $f$ is constant on $W_0$. As $W$ is open and dense this property will then hold throughout $M$, and combined with $\nabla S_2=0$ yields $\nabla A=0$.

Assume that $\grad f$ does not vanish identically on $W_0$. Take a connected and open subset $U$ of $W_0$ where $\grad f\neq 0$ and $f\neq 0$ at each point in $U$. Consider an orthonormal frame field $\{E_i\}$ on $U$ such that $AE_i=\lambda_iE_i$ and, from $(\nabla S_2)(E_i,E_j)=(\nabla S_2)(E_j,E_i)$ and $\nabla A$ symmetric, we have
$$
-m^2f(E_if)E_j+2m(E_if)\lambda_jE_j=-m^2f(E_jf)E_i+2m(E_jf)\lambda_iE_i,\quad\forall i,j\in\{1,\ldots,m\}.
$$
For $i\neq j$, it follows that 
$$
(2\lambda_j-mf)E_if=0.
$$

From here, we get that
\begin{equation}\label{eq:intro2}
(\lambda_i-\lambda_j)(2\lambda_j-mf)E_if=0,\quad\forall i,j\in\{1,\ldots,m\}.
\end{equation}

Since $\grad f\neq 0$, we can assume that there exists $i_0\in\{1,\ldots,m\}$ such that $E_{i_0}f\neq 0$ at any point in $U$. From \eqref{eq:intro2}, one obtains, on $U$,
$$
2\lambda_{i_0}\lambda_j-m\lambda_{i_0}f-2\lambda_j^2+m\lambda_jf=0,\quad\forall j\in\{1,\ldots,m\}
$$
and, therefore,
\begin{equation}\label{eq:intro3}
(2-m)mf\lambda_{i_0}-2|A|^2+m^2f^2=0.
\end{equation}
The squared norms of $A$ and $S_2$ are related by
$$
16m^2f^2|A|^2=4|S_2|^2-m^4f^4(m-8),
$$
and Equation \eqref{eq:intro3} shows that
$$
(2-m)mf\lambda_{i_0}=\frac{4|S_2|^2-m^5f^4}{8m^2f^2}.	
$$

If $m>2$, the above equation can be re-written as
\begin{equation}\label{eq:intro4}
2mf\lambda_{i_0}=\frac{4|S_2|^2-m^5f^4}{4(2-m)m^2f^2}.
\end{equation}
Since $\nabla S_2=0$, we have that $|S_2|$ is a constant on $M$ and the eigenvalues of $S_2$ also are constant functions on $M$, i.e.,  
$$
-\frac{m^2}{2}f^2+2mf\lambda_i=c_i=\cst.
$$
It follows, using \eqref{eq:intro4}, that on $U$, we have
$$
-\frac{m^2}{2}f^2+\frac{4|S_2|^2-m^5f^4}{4(2-m)m^2f^2}=c_{i_0},
$$
which gives a polynomial equation in $f^2$ with constant coefficients forcing $f$ to be constant on $U$ and contradicting $E_{i_0}f\neq 0$ at any point of $U$.

If $m=2$, Equation \eqref{eq:intro3} gives $|A|^2=2f^2$, which leads to $\lambda_1=\lambda_2$ on $U$. Therefore, $U$ is umbilical in $N$ and $f$ is constant on $U$. As we have already seen, this is a contradiction and we are finished.
\end{proof}

\begin{remark}
The case when $m\neq 4$ had already been proved, by a different method, in \cite{LMO}.
\end{remark}

\begin{remark} 
Hypersurfaces of space forms with $\nabla A=0$ were studied in \cite{L,R}. They can only have one or two distinct principal curvatures and they must be constant. When admitting two distinct principal curvatures they are intrinsically isometric to the product of two space forms and, using either the Moore Lemma or the Fundamental Theorem of hypersurfaces in space forms, one obtains a complete classification.
\end{remark}

The basic characterization of hypersurfaces in space forms in terms of $S_2$ is given by the following proposition.

\begin{proposition}[\cite{LMO}]
Let $\phi:M^m\rightarrow N^{m+1}(c)$ be a hypersurface in a space form $N$ and $S_2$ its biharmonic stress-energy tensor. Then we have$:$
\begin{enumerate}
\item if $m\neq 4$, then $S_2=0$ if and only if $M$ is minimal$;$ 

\item if $m=4$, then $S_2=0$ if and only if $M$ is either minimal or umbilical$;$

\item $S_2=a\langle,\rangle$, with $a\neq 0$, if and only if $m\neq 4$ and $M$ is umbilical and non-minimal.
\end{enumerate}
\end{proposition}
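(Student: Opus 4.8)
The plan is to start from the explicit formula for the biharmonic stress-energy tensor of a hypersurface in a space form, $S_2 = -\frac{m^2}{2}f^2\id + 2mfA$, which was already established and used in the proof of the earlier proposition (here $f = (1/m)\trace A$ is the mean curvature function). All three assertions should then drop out by elementary algebra, the only delicate points being the exceptional dimension $m=4$ and the zeros of $f$. The recurring device throughout is to take the trace: if $A = \mu\,\id$ on some subset, then $m\mu = \trace A = mf$ forces $\mu = f$, which pins down the umbilical principal curvature.

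For (1) and (2) I would rewrite $S_2 = 0$ as $f\,(4A - mf\,\id) = 0$, so that at every point either $f = 0$ or $A = \tfrac{m}{4}f\,\id$; in the second case taking the trace yields $(4-m)f = 0$. Hence, if $m \neq 4$, no point with $f \neq 0$ can occur and $M$ is minimal, while the converse is immediate; this settles (1). If $m = 4$ the alternative becomes ``$f = 0$ or $A = f\,\id$'' pointwise, and conversely $A = f\,\id$ gives $S_2 = -8f^2\id + 8f^2\id = 0$. To reach the clean statement ``minimal or totally umbilical'' I would then argue on the open set $\Omega = \{f \neq 0\}$, where $A = f\,\id$: the Codazzi equation $(\nabla_X A)Y = (\nabla_Y A)X$ forces $(Xf)Y = (Yf)X$, hence $f$ is locally constant on $\Omega$ (using $m \geq 2$), and a connectedness argument promotes $\Omega$ to all of $M$ unless $\Omega$ is empty; this settles (2).

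For (3), starting from $S_2 = a\,\id$ I would write $2mfA = \bigl(a + \tfrac{m^2}{2}f^2\bigr)\id$. If $f$ vanished somewhere, the left-hand side would vanish there while the right-hand side equals $a\,\id \neq 0$; so $f$ is nowhere zero, $M$ is totally umbilical, and $A = f\,\id$. Substituting back (or taking the trace) gives $a = \tfrac{m(4-m)}{2}f^2$, which is non-zero exactly when $m \neq 4$; thus $S_2 = a\,\id$ with $a \neq 0$ forces $m \neq 4$ and $M$ umbilical and non-minimal. Conversely, for such an $M$ one has $A = f\,\id$ and the formula gives $S_2 = \bigl(2m - \tfrac{m^2}{2}\bigr)f^2\id = \tfrac{m(4-m)}{2}f^2\id$, a non-zero multiple of the metric.

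I expect the only genuine obstacle to be the global step in (2): upgrading the pointwise dichotomy to ``minimal or totally umbilical'' relies on the Codazzi equation together with the connectedness of $M$ and, implicitly, on the classical fact that a totally umbilical hypersurface of a space form of dimension at least two has constant mean curvature. Everything else is bookkeeping with the formula for $S_2$.
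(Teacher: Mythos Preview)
The paper does not supply its own proof of this proposition: it is quoted from \cite{LMO} and stated without argument, so there is nothing in the present paper to compare your attempt against.

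That said, your proof is correct and is essentially the natural one. The formula $S_2=-\tfrac{m^2}{2}f^2\,\id+2mfA$ is exactly what the paper uses elsewhere, and your trace manipulations in (1) and (3) are unimpeachable. The only point worth spelling out more carefully is the global step in (2): on each connected component $\Omega_0$ of $\Omega=\{f\neq 0\}$ the Codazzi equation forces $f$ to equal a nonzero constant $c_0$; since $\Omega_0$ is open and closed in $\Omega$, its boundary in $M$ lies in $\{f=0\}$, yet continuity gives $f=c_0$ there, so the boundary is empty and $\Omega_0=M$ by connectedness. You indicated this, but in a write-up it deserves the extra sentence. Likewise, in the converse direction of (3) you should make explicit that umbilicity in a space form (with $m\geq 2$) forces $f$ to be constant, so that $a=\tfrac{m(4-m)}{2}f^2$ is genuinely a constant; you flagged this at the end, so the logic is sound.
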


Essential to further computations are the following properties of the shape operator $A$.

\begin{lemma}\label{lemmaA}
Let $\phi:M^m\rightarrow N^{m+1}(c)$ be a hypersurface in a space form with the shape operator $A$. Then 
\begin{enumerate}

\item $A$ is symmetric$;$

\item $\nabla A$ is symmetric$;$

\item $\langle (\nabla A)(\cdot,\cdot),\cdot\rangle$ is totally symmetric$;$

\item $\Div A=\trace\nabla A=m\grad f$$.$
\end{enumerate}
\end{lemma}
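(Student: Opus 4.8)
The plan is to derive the four properties in sequence, each by unwinding the relevant definition via the Gauss and Weingarten formulas; the only point at which the constant curvature of $N$ is genuinely used is item (2). For (1), I would start from the Weingarten formula $AX=-(\bar\nabla_X\eta)^{\top}$ for the unit normal $\eta$, so that $\langle AX,Y\rangle=-\langle\bar\nabla_X\eta,Y\rangle=\langle\eta,\bar\nabla_XY\rangle$ because $\langle\eta,Y\rangle\equiv0$. Since $\bar\nabla$ is torsion-free, $\bar\nabla_XY-\bar\nabla_YX=[X,Y]$ is tangent, hence orthogonal to $\eta$, and $\langle AX,Y\rangle=\langle AY,X\rangle$ follows immediately.

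For (2), I would write out the Gauss equation relating $R^N$ to the intrinsic curvature and the second fundamental form $\sigma(X,Y)=\langle AX,Y\rangle\eta$, and extract the normal component of $R^N(X,Y)Z$ (for $X,Y,Z$ tangent). Using that the normal connection is trivial on a rank-one normal bundle, $(\nabla^{\perp}_X\sigma)(Y,Z)=\langle(\nabla_XA)Y,Z\rangle\,\eta$, so the Codazzi equation reads $(R^N(X,Y)Z)^{\perp}=\big(\langle(\nabla_XA)Y,Z\rangle-\langle(\nabla_YA)X,Z\rangle\big)\eta$. Since $N$ has constant sectional curvature $c$, $R^N(X,Y)Z=c(\langle Y,Z\rangle X-\langle X,Z\rangle Y)$ is tangent, so its normal part vanishes, forcing $(\nabla_XA)Y=(\nabla_YA)X$. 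This is the step that really uses the hypothesis on $N$, and it is the main (mild) obstacle, essentially because it requires setting up the Gauss--Codazzi formalism carefully and checking the vanishing of the normal part.

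For (3), put $T(X,Y,Z):=\langle(\nabla_XA)Y,Z\rangle$. Symmetry of $T$ in its first two slots is just (2). For symmetry in the last two, I would expand $(\nabla_XA)Y=\nabla_X(AY)-A\nabla_XY$ and use metric compatibility of $\nabla$ together with symmetry of $A$ from (1): a short manipulation gives
\begin{align*}
\langle(\nabla_XA)Y,Z\rangle&=X\langle AY,Z\rangle-\langle A\nabla_XY,Z\rangle-\langle AY,\nabla_XZ\rangle\\
&=\langle Y,\nabla_X(AZ)-A\nabla_XZ\rangle=\langle Y,(\nabla_XA)Z\rangle,
\end{align*}
so $T(X,Y,Z)=T(X,Z,Y)$. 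The two transpositions $(X\,Y)$ and $(Y\,Z)$ generate all of $S_3$, hence $T$ is totally symmetric.

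For (4), fix a local orthonormal frame $\{E_i\}$ and recall $\Div A=\trace\nabla A=\sum_i(\nabla_{E_i}A)E_i$. Let $f=(1/m)\trace A$ be the mean curvature function. For any tangent vector $X$, using the total symmetry of (3) and the fact that trace commutes with $\nabla_X$,
\begin{align*}
\langle\Div A,X\rangle&=\sum_i\langle(\nabla_{E_i}A)E_i,X\rangle=\sum_i\langle(\nabla_XA)E_i,E_i\rangle\\
&=\trace(\nabla_XA)=X(\trace A)=m\,\langle\grad f,X\rangle.
\end{align*}
Since $X$ is arbitrary, $\Div A=\trace\nabla A=m\grad f$. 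No further difficulty is anticipated here; the whole lemma is a matter of organizing standard identities in the right order.
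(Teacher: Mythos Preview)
Your proof is correct in every detail: the Weingarten computation for (1), the Codazzi argument via the tangentiality of $R^N(X,Y)Z$ for (2), the metric-compatibility manipulation for (3), and the trace commutation for (4) are all carried out cleanly. The paper itself states this lemma without proof, treating it as standard background, so there is nothing to compare against; your write-up is exactly the routine verification one would expect.
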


The next result computes the Laplacian of the biharmonic stress-energy tensor and will be used to derive a Simons type equation for hypersurfaces of space forms.

\begin{theorem}[\cite{LO}]\label{thm:LO} Let $\phi:M\rightarrow N$ a smooth map between two Riemannian manifolds. Then the (rough) Laplacian of $S_2$ is the symmetric $(0,2)$ tensor
\begin{align}\label{eq:general}
&(\Delta S_2)(X,Y) =\\\nonumber &\Big\{ \la \Delta \tau(\phi), \tau(\phi)\ra - 2 |\nabla \tau(\phi)|^2 -2 \sum \la R(X_i,X_j)d\phi(X_i), \nabla_{X_j}\tau(\phi)\ra \\\nonumber
& -2 \la d\phi(\ricci(\cdot)),\nabla_{(\cdot)}\tau(\phi)\ra -2\la \nabla d\phi , \nabla^{2} \tau(\phi) \ra +  \la d\phi , \nabla(\Delta \tau(\phi))\ra \\\nonumber
&-\la \nabla(\trace R^{N}(d\phi(\cdot),\tau(\phi)) d\phi(\cdot)),d\phi \ra - \la \trace R^{N}(d\phi(\cdot),\tau(\phi)) d\phi(\cdot),\tau(\phi)\ra  \Big\} \la X,Y\ra\\\nonumber
& + 2 \la \nabla_{X}\tau(\phi) , \nabla_{Y}\tau(\phi)\ra + \sum \la R(X_i,X)d\phi(X_i) , \nabla_{Y}\tau(\phi)\ra\\\nonumber & +\sum  \la R(X_i,Y)d\phi(X_i), \nabla_{X} \tau(\phi)\ra \\\nonumber
&+ \la d\phi(\ricci(X)), \nabla_{Y}\tau(\phi)\ra + \la d\phi(\ricci(Y)), \nabla_{X}\tau(\phi)\ra \\\nonumber
& + 2\sum  \la \nabla d\phi(X_i,X),(\nabla^2 \tau(\phi))(X_i,Y)\ra + 2\sum  \la \nabla d\phi(X_i,Y),(\nabla^2 \tau(\phi))(X_i,X)\ra \\\nonumber
& -\la d\phi(X),\nabla_{Y}(\Delta \tau(\phi))\ra - \la d\phi(Y),\nabla_{X}(\Delta \tau(\phi))\ra 
+ \sum \la d\phi(X), R(X_i , Y)\nabla_{X_i}\tau(\phi)\ra \\\nonumber
&+\sum  \la d\phi(Y), R(X_i , X)\nabla_{X_i}\tau(\phi)\ra 
\\\nonumber
&+ \sum \la d\phi(X), (\nabla R)(X_i , X_i ,Y, \tau(\phi)) + R(X_i ,Y)\nabla_{X_i}\tau(\phi)\ra\\\nonumber & +\sum  \la d\phi(Y), (\nabla R)(X_i , X_i ,X, \tau(\phi)) + R(X_i ,X)\nabla_{X_i}\tau(\phi)\ra \\\nonumber &
+ \la d\phi(X), \nabla_{\ricci(Y)}\tau(\phi)\ra + \la d\phi(Y), \nabla_{\ricci(X)}\tau(\phi)\ra ,
\end{align}
where $\{X_i\}$ is a local orthonormal frame field.
\end{theorem}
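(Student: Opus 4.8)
The plan is to exploit the explicit algebraic form of $S_2$. Write $f_1=\frac{1}{2}|\tau(\phi)|^2$ and $f_2=\langle d\phi,\nabla\tau(\phi)\rangle$ for the two scalar functions occurring in $S_2$, and $B(X,Y)=\langle d\phi(X),\nabla_Y\tau(\phi)\rangle$ for the $(0,2)$-tensor, so that $S_2=(f_1+f_2)\langle\cdot,\cdot\rangle-(B+B^{t})$, where $B^{t}(X,Y)=B(Y,X)$. Since the Levi--Civita connection is metric, $\nabla\langle\cdot,\cdot\rangle=0$, hence $\Delta\big((f_1+f_2)\langle\cdot,\cdot\rangle\big)=(\Delta f_1+\Delta f_2)\langle\cdot,\cdot\rangle$, and it suffices to compute $\Delta f_1$, $\Delta f_2$ and $\Delta B$ separately: the entire block multiplying $\langle X,Y\rangle$ in \eqref{eq:general} will come from $(\Delta f_1+\Delta f_2)\langle\cdot,\cdot\rangle$, and everything else from $-\Delta(B+B^{t})=-\Delta B-(\Delta B)^{t}$.

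For the scalar parts I would use the Bochner identity for the squared norm of a section of a bundle with compatible connection,
$$
\Delta\frac{1}{2}|\tau(\phi)|^2=\langle\Delta\tau(\phi),\tau(\phi)\rangle-|\nabla\tau(\phi)|^2,
$$
together with the Leibniz rule for the rough Laplacian of a pointwise contraction,
$$
\Delta\langle d\phi,\nabla\tau(\phi)\rangle=\langle\Delta d\phi,\nabla\tau(\phi)\rangle+\langle d\phi,\Delta\nabla\tau(\phi)\rangle-2\langle\nabla d\phi,\nabla^2\tau(\phi)\rangle .
$$
Since $B$ is obtained from $d\phi\otimes\nabla\tau(\phi)$ by the parallel contraction pairing the two $\phi^{-1}TN$ factors, the same Leibniz rule gives
$$
(\Delta B)(X,Y)=\langle(\Delta d\phi)(X),\nabla_Y\tau(\phi)\rangle+\langle d\phi(X),(\Delta\nabla\tau(\phi))(Y)\rangle-2\sum_i\langle\nabla d\phi(X_i,X),(\nabla^2\tau(\phi))(X_i,Y)\rangle .
$$

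The two remaining ingredients are Weitzenb\"ock-type identities. First, the Eells--Sampson formula for $\Delta d\phi$,
$$
(\Delta d\phi)(X)=-\nabla_X\tau(\phi)-d\phi(\ricci(X))-\trace R^{N}(d\phi(\cdot),d\phi(X))d\phi(\cdot)
$$
(with the signs dictated by the convention $\Delta=-\trace(\nabla^\phi)^2$); substituting it produces the $-|\nabla\tau(\phi)|^2$ and $\langle d\phi(\ricci(\cdot)),\nabla_{(\cdot)}\tau(\phi)\rangle$ terms in the scalar block, the $2\langle\nabla_X\tau(\phi),\nabla_Y\tau(\phi)\rangle$ term and the $\ricci(X),\ricci(Y)$ terms outside it, and one half of each curvature term. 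Second, a commutation identity for $\Delta\nabla\tau(\phi)$: working at a point with a normal orthonormal frame $\{X_i\}$ and applying the Ricci identity twice to move $\nabla_X$ past $\sum_i\nabla_{X_i}\nabla_{X_i}$, one obtains a formula of the shape
$$
(\Delta\nabla\tau(\phi))(X)=\nabla_X\big(\Delta\tau(\phi)\big)-\nabla_{\ricci(X)}\tau(\phi)+2\sum_i R(X_i,X)\nabla_{X_i}\tau(\phi)+\sum_i(\nabla_{X_i}R)(X_i,X)\tau(\phi) ,
$$
where $R(\cdot,\cdot)=R^{N}(d\phi(\cdot),d\phi(\cdot))$ is the curvature of $\phi^{-1}TN$; this supplies the $\nabla(\Delta\tau(\phi))$ terms, the other half of the curvature terms, and the $\nabla R^{N}$ terms. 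Inserting both identities into the expressions for $\Delta f_2$ and $\Delta B$, symmetrising in $X\leftrightarrow Y$, and using the curvature symmetries to merge the two halves of each curvature term yields \eqref{eq:general}.

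The computation is long but essentially mechanical; the genuinely delicate point is the commutation identity for $\Delta\nabla\tau(\phi)$: it is a third-order calculation mixing the curvature of the domain (which produces the $\ricci$ terms) with the pulled-back curvature of the target and its covariant derivative, and it is here that one most easily loses the $\nabla R^{N}$ contribution or misplaces a factor of $2$ or a sign. A convenient consistency check throughout is that $S_2$, and hence $\Delta S_2$, is symmetric in $X$ and $Y$.
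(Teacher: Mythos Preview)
The paper does not supply its own proof of this theorem: it is quoted verbatim from \cite{LO} and used as a black box in the subsequent computations. So there is no in-paper argument to compare against, and your sketch is in fact the natural (and presumably the original) route: split $S_2=(f_1+f_2)\langle\cdot,\cdot\rangle-(B+B^t)$, handle the scalar part with the Bochner identity and the product rule, and handle $B$ with the Leibniz rule together with the Weitzenb\"ock formula for $\Delta d\phi$ and a commutation formula for $\Delta(\nabla\tau)$. Each of the displayed identities you wrote for $\Delta f_1$, $\Delta f_2$ and $\Delta B$ is correct with the paper's sign convention $\Delta=-\trace(\nabla^{\phi})^2$, and the Eells--Sampson formula you state matches the corresponding terms of~\eqref{eq:general} once one remembers that the $R$ appearing there is the pull-back curvature $R(X,Y)=R^N(d\phi X,d\phi Y)$.

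The one place your sketch slips is precisely where you predicted it would. Carrying out the double Ricci commutation carefully (swap the last two slots of $\nabla^3\tau$, then the first two, and use $\sum_i R^M(X_i,Y)X_i=-\ricci(Y)$ in the paper's curvature convention) gives
\[
(\Delta\nabla\tau)(Y)=\nabla_Y(\Delta\tau)-\nabla_{\ricci(Y)}\tau\;-\;2\sum_i R(X_i,Y)\nabla_{X_i}\tau\;-\;\sum_i(\nabla_{X_i}R)(X_i,Y)\tau,
\]
i.e.\ the two curvature terms carry a \emph{minus} sign, not the plus sign in your displayed formula. With the correct signs, the contribution $-\langle d\phi(X),(\Delta\nabla\tau)(Y)\rangle$ to $-\Delta B(X,Y)$ reproduces exactly the $-\langle d\phi(X),\nabla_Y(\Delta\tau)\rangle$, $+\langle d\phi(X),\nabla_{\ricci(Y)}\tau\rangle$, and the two separate $R(X_i,Y)\nabla_{X_i}\tau$ lines of~\eqref{eq:general} (the coefficient $2$ is split as $1+1$ in the statement). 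After this correction your outline goes through without further difficulty.
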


\begin{remark} In equation \eqref{eq:general} we have $(\nabla^2\tau(\phi))(X,Y)=\nabla_X\nabla_Y\tau(\phi)-\nabla_{\nabla_XY}\tau(\phi)$, while $R$ is the curvature tensor in $\phi^{-1}(TN)$ and
\begin{align*}
(\nabla R)(X,Y,Z,\sigma)&=(\nabla_{X}R)(Y,Z,\sigma)\\\nonumber&=\nabla_{X}R(Y,Z)\sigma - R(\nabla_{X}Y,Z)\sigma - R(Y,\nabla_{X}Z)\sigma - R(Y,Z)\nabla_X\sigma.
\end{align*}
\end{remark}

Recall that the decomposition in its normal and tangent part of the biharmonic equation $\tau_2(\phi)=0$ of a hypersurface $M^m$ in $N^{m+1}$ yields
$$
\Delta f+f|A|^2-f\ricci^N(\eta,\eta)=0
$$
and
$$
2A(\grad f)+mf\grad f-2f(\ricci^N(\eta))^{\top}=0,
$$
where $(\ricci^N(\eta))^{\top}$ is the tangent component of the Ricci curvature of $N$ in the direction of $\eta$. It is easy to see that while any CMC hypersurface $M$ in a space form $N^{m+1}(c)$ is biconservative, $M$ is proper-biharmonic if and only if $|A|^2=cm$, hence $c$ must be positive.

\section{A Simons type formula for hypersurfaces and applications}

In \cite{MO}, asking only that a proper-biharmonic hypersurface be compact and with constant scalar curvature and using the Weitzenb\"ock formula for the differential $df$ of the mean curvature function, the authors proved that $f$ has to be constant. Our approach is different as we work with tensors and try to find the best tensorial formula in order to give an answer to Conjecture \ref{conj2}.

The Laplacian of the squared norm of the biharmonic stress-energy tensor of an immersed hypersurface can be computed and put to use to prove some rigidity results.

\begin{proposition}\label{p:delta} Let $\phi:M^m\rightarrow N^{m+1}(c)$ be a hypersurface in a space form. We have
\begin{align}\label{eq:delta1}
\frac{1}{2}\Delta|S_2|^2=&-|\nabla S_2|^2+4cm^4f^4-4m^3f^3(\trace A^3)-4m^2f^2|A|^2(cm-|A|^2)\\\nonumber
&+m^4(m-16)f^2|\grad f|^2+4m^2|A|^2|\grad f|^2+2m^2|A|^2\Delta f^2\\\nonumber &+4m^2f\langle\grad s,\grad f\rangle-8m^2\Div(f\ricci(\grad f))\\\nonumber & +\frac{m^5}{8}\Delta f^4-4cm^2(m-1)\Delta f^2-10m^2f\langle\tau_2^{\top}(\phi),\grad f\rangle\\\nonumber &-4m^2f^2\Div\left(\tau_2^{\top}(\phi)\right)-2\left|\tau_2^{\top}(\phi)\right|^2+4mf\langle\nabla\tau_2^{\top}(\phi),A\rangle.
\end{align}
\end{proposition}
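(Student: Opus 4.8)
The plan is to specialise the general formula \eqref{eq:general} of Theorem \ref{thm:LO} to the isometric immersion $\phi:M^m\to N^{m+1}(c)$ and then contract the resulting symmetric $(0,2)$-tensor $\Delta S_2$ against $S_2$. For a hypersurface we have $S_2=-\tfrac{m^2}{2}f^2\,\id+2mf\,A$, and with the convention $\Delta=-\trace\nabla^2$ one has $\tfrac12\Delta|S_2|^2=\langle\Delta S_2,S_2\rangle-|\nabla S_2|^2$, so
$$
\tfrac12\Delta|S_2|^2=-\tfrac{m^2}{2}f^2\,\trace(\Delta S_2)+2mf\,\langle\Delta S_2,A\rangle-|\nabla S_2|^2 .
$$
Thus the whole problem reduces to expressing $\trace(\Delta S_2)$ and $\langle\Delta S_2,A\rangle$ in terms of $f$, $A$ and their covariant derivatives.

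First I would translate every ingredient of \eqref{eq:general} into hypersurface data. Since $\tau(\phi)=mf\eta$, the Weingarten and Gauss formulas give $\nabla_X\tau(\phi)=m(Xf)\eta-mfAX$, hence $|\nabla\tau(\phi)|^2=m^2|\grad f|^2+m^2f^2|A|^2$, while $(\nabla^2\tau(\phi))(X,Y)$ splits into the normal part $\big(m\Hess f(X,Y)-mf\langle A^2X,Y\rangle\big)\eta$ and the tangent part $-m(Yf)AX-m(Xf)AY-mf(\nabla_XA)Y$; also $\nabla d\phi(X,Y)=\langle AX,Y\rangle\eta$. Because $N$ has constant curvature, $R^N$ is the standard curvature tensor and $\nabla R^N=0$, so every $R^N$-term in \eqref{eq:general} collapses to an explicit multiple of $c$; in particular $\trace R^{N}(d\phi(\cdot),\tau(\phi))d\phi(\cdot)=-cm^2f\eta$, and together with $\tau_2(\phi)=-\Delta\tau(\phi)-\trace R^{N}(d\phi,\tau(\phi))d\phi$ this yields $\Delta\tau(\phi)=cm^2f\eta-\tau_2(\phi)$, which is precisely how the $\tau_2^{\top}(\phi)$, $\nabla\tau_2^{\top}(\phi)$ and $\Div(\tau_2^{\top}(\phi))$ terms enter the final formula. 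The intrinsic curvature is handled by the Gauss equation, $\ricci^M=c(m-1)\,\id+mfA-A^2$ and $s=cm(m-1)+m^2f^2-|A|^2$.

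With these substitutions I would expand $\trace(\Delta S_2)$ and $\langle\Delta S_2,A\rangle$ term by term, systematically using Lemma \ref{lemmaA} — symmetry of $A$, total symmetry of $\langle(\nabla A)(\cdot,\cdot),\cdot\rangle$, and $\Div A=\trace\nabla A=m\grad f$ — to reduce the $\nabla A$-contractions to $|\grad f|$, $\Hess f$ and $|\nabla S_2|^2$ type quantities; the Bochner identity $f\Delta f=\tfrac12\Delta f^2+|\grad f|^2$; the divergence identities $\Div(A(\grad f))=m|\grad f|^2+\langle A,\Hess f\rangle$ and the analogous one for $f\,\ricci^M(\grad f)$ to convert second-derivative terms into divergences; and the trace relations $\trace A=mf$, $\trace A^2=|A|^2$, $\trace A^3$, which produce the $\trace A^3$ and $|A|^2(cm-|A|^2)$ terms. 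The last step is a regrouping: each residual $A(\grad f)$, $\Div(A(\grad f))$ and $\langle\nabla(A(\grad f)),A\rangle$ is rewritten through $\tau_2^{\top}(\phi)=-m\big(2A(\grad f)+mf\grad f\big)$, producing exactly the five $\tau_2^{\top}(\phi)$-terms of \eqref{eq:delta1}. (As a cross-check one may instead compute $\Delta A$ directly from the Bochner formula for the Codazzi tensor $A$ and use $\Delta(fA)=(\Delta f)A-2\nabla_{\grad f}A+f\Delta A$, together with $\Delta S_2=-\tfrac{m^2}{2}(\Delta f^2)\id+2m\Delta(fA)$.)

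The main obstacle is computational stamina rather than any new idea: formula \eqref{eq:general} has more than twenty terms, several of them sums over a frame, and each must be carried through the normal/tangent decomposition with the sign convention $\Delta=-\trace\nabla^2$ kept straight and then contracted twice against $S_2$. The delicate points are the pairings of $\nabla d\phi$ with $\nabla^2\tau(\phi)$ (which isolate the normal component of $\nabla^2\tau(\phi)$, hence $\langle A,\Hess f\rangle$ and $|A|^2$-terms), the $\nabla(\Delta\tau(\phi))$ and $\nabla R$ terms (which are tractable here only because $\nabla R^N=0$, but still need the differentiated identity for $\Delta\tau(\phi)$), and the final cancellations, which must leave no loose $\trace A^4$ or bare $\Hess f$ term behind. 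Beyond Theorem \ref{thm:LO}, Lemma \ref{lemmaA} and the Gauss/Codazzi equations, nothing further is required.
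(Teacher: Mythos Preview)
Your proposal is correct and follows essentially the same approach as the paper: specialise the general formula \eqref{eq:general} for $\Delta S_2$ to the hypersurface case term by term, then contract against $S_2=-\tfrac{m^2}{2}f^2\,\id+2mfA$ using the identity $\tfrac12\Delta|S_2|^2=\langle\Delta S_2,S_2\rangle-|\nabla S_2|^2$, with Lemma \ref{lemmaA}, the Gauss equation, the divergence identities you list, and the substitution $\tau_2^{\top}(\phi)=-m(2A(\grad f)+mf\grad f)$ doing the final regrouping. The paper merely records the intermediate closed form for $(\Delta S_2)(X,Y)$ before contracting, but otherwise the ingredients and the route are the same.
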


\begin{proof} This is just an application of Formula \eqref{eq:general} of $\Delta S_2$ for an immersed hypersurface $M^m$ in a space form $N(c)$. For the sake of simplicity, we consider a point $p\in M$ and a geodesic frame field around it, and compute all terms at $p$. First, since $\tau(\phi)=mH$, we have
$$
\langle\Delta\tau(\phi),\tau(\phi)\rangle=m^2\langle\Delta H,H\rangle
$$
and
\begin{align*}
-2|\nabla\tau(\phi)|^2&=-2m^2|\nabla H|^2=-2m^2\sum|\nabla_{X_i}H|^2\\&=-2m^2\sum|-fAX_i+(X_if)\eta|^2\\&=-2m^2f^2|A|^2-2m^2|\grad f|^2.
\end{align*}

Next, using the expression of the curvature of a space form
\begin{equation}\label{eq:curv}
R^N(X,Y)Z=c\{\langle Y,Z\rangle X-\langle X,Z\rangle Y\},
\end{equation}
one obtains
\begin{align*}
-2\sum\langle R(X_i,X_j)d\phi(X_i),\nabla_{X_j}\tau(\phi)\rangle =&-2\sum\langle R^N(d\phi(X_i),d\phi(X_j))d\phi(X_i),\\&-mfAX_j+m\nabla^{\perp}_{X_j}H\rangle\\=&2cmf(1-m)(\trace A)=2cm^2f^2-2cm^3f^2.
\end{align*}
In the same way, we get
$$
-2\langle d\phi(\ricci(\cdot)),\nabla_{(\cdot)}\tau(\phi)\rangle=2mf\langle \ricci,A\rangle
$$
and then, since $\ricci=c(m-1)I+mfA-A^2$,
$$
-2\langle d\phi(\ricci(\cdot)),\nabla_{(\cdot)}\tau(\phi)\rangle=2c(m-1)m^2f^2+2m^2f^2|A|^2-2mf(\trace A^3).
$$

Since in the case of immersions we have $(\nabla d\phi)(X_i,X_j)=B(X_i,X_j)$, a direct computation using the Weingarten equation shows that
$$
-2\langle\nabla d\phi,\nabla^2\tau(\phi)\rangle=2mf(\trace A^3)-2m\langle A,\Hess f\rangle.
$$
Furthermore, for any hypersurface, we have
\begin{align*}
\langle A,\Hess f\rangle&=\sum\langle AX_i,\nabla_{X_i}\grad f\rangle=\sum\langle X_i,A(\nabla_{X_i}\grad f)\rangle\\&=\sum\langle X_i,\nabla_{X_i}A(\grad f)-(\nabla_{X_i}A)(\grad f)\rangle\\&=\Div(A(\grad f))-m|\grad f|^2
\end{align*}
and, therefore,
$$
-2\langle\nabla d\phi,\nabla^2\tau(\phi)\rangle=2mf(\trace A^3)-2m\Div(A(\grad f))+2m^2|\grad f|^2.
$$

The next term in the formula of $\Delta S_2$ is
\begin{align*}
\langle d\phi,\nabla(\Delta\tau(\phi))\rangle&=m\langle d\phi,\nabla(\Delta H)\rangle=m\sum\langle d\phi(X_i),\nabla_{X_i}(\Delta H)\rangle\\&=m\sum\left\{X_i\langle d\phi(X_i),\Delta H\rangle-\langle(\nabla d\phi)(X_i,X_i),\Delta H\rangle\right\}\\&=-\Div\tau_2^{\top}(\phi)-m^2\langle H,\Delta H\rangle. 
\end{align*}

Again using equation \eqref{eq:curv}, one obtains
\begin{align*}
-\langle\nabla(\trace R^N(d\phi(\cdot),\tau(\phi))d\phi(\cdot)),d\phi\rangle&=cm^2\langle\nabla H,d\phi(\cdot)\rangle=cm^2\sum\langle-fAX_i,d\phi(X_i)\rangle\\&=-cm^3f^2
\end{align*}
and
$$
-\langle\trace R^N(d\phi(\cdot),\tau(\phi))d\phi(\cdot)),\tau(\phi)\rangle=cm^3f^2.
$$

The expressions of the following terms can be obtained by some direct computation and also using Lemma \ref{lemmaA}, in the same way as above,
$$
2\langle\nabla_X\tau(\phi),\nabla_Y\tau(\phi)\rangle=2m^2f^2\langle AX,AY\rangle+2m^2(Xf)(Yf),
$$
\begin{align*}
\sum\langle R(X_i,X)d\phi(X_i),\nabla_Y\tau(\phi)\rangle=&\sum\langle R(X_i,Y)d\phi(X_i),\nabla_X\tau(\phi)\rangle\\=&cm(m-1)f\langle AX,Y\rangle,
\end{align*}
$$
\langle d\phi(\ricci(X)),\nabla_Y\tau(\phi)\rangle=-mf\langle\ricci(X),AY\rangle,
$$
$$
2\sum\langle\nabla d\phi(X_i,X),(\nabla^2\tau(\phi))(X_i,Y)\rangle=-2mf\langle A^2Y,AX\rangle+2m(\Hess f)(AX,Y),
$$
$$
2\sum\langle\nabla d\phi(X_i,Y),(\nabla^2\tau(\phi))(X_i,X)\rangle=-2mf\langle A^2X,AY\rangle+2m(\Hess f)(AY,X),
$$
$$
\sum\langle d\phi(X),R(X_i,Y)\nabla_{X_i}\tau(\phi)\rangle=-cmf\langle AX,Y\rangle+cm^2f^2\langle X,Y\rangle,
$$
$$
\sum\langle d\phi(Y),R(X_i,X)\nabla_{X_i}\tau(\phi)\rangle=-cmf\langle AX,Y\rangle+cm^2f^2\langle X,Y\rangle,
$$
$$
\langle d\phi(X), (\nabla R)(X_i , X_i ,Y, \tau(\phi)) + R(X_i ,Y)\nabla_{X_i}\tau(\phi)\rangle=0,
$$
$$
\langle d\phi(Y), (\nabla R)(X_i , X_i ,X, \tau(\phi)) + R(X_i ,X)\nabla_{X_i}\tau(\phi)\rangle=0,
$$
$$
\langle d\phi(X),\nabla_{\ricci(Y)}\tau(\phi)\rangle=-mf\langle AX,\ricci(Y)\rangle,
$$
$$
\langle d\phi(Y),\nabla_{\ricci(X)}\tau(\phi)\rangle=-mf\langle AY,\ricci(X)\rangle.
$$

Finally, for the remaining terms, we have
\begin{align*}
-\langle d\phi(X),\nabla_Y(\Delta\tau(\phi))\rangle&=-m\langle d\phi(X),\nabla_Y(\Delta H)\rangle\\&=-mY(\langle d\phi(X),\Delta H\rangle+m\langle\nabla_Yd\phi(X),\Delta H\rangle\\&=Y(\langle\tau_2^{\top}(\phi),X\rangle)+m\langle B(X,Y),\Delta H\rangle-\langle \nabla_XY,\tau_2^{\top}(\phi)\rangle
\end{align*}
and
$$
-\langle d\phi(Y),\nabla_X(\Delta\tau(\phi))\rangle=X(\langle\tau_2^{\top}(\phi),Y\rangle)+m\langle B(X,Y),\Delta H\rangle-\langle \nabla_YX,\tau_2^{\top}(\phi)\rangle.
$$

Assembling all these terms and taking into account that
$$
\tau_2^{\top}(\phi)=-2mA(\grad f)-\frac{m^2}{2}\grad f^2,
$$
one obtains
\begin{align*}
(\Delta S_2)(X,Y)=&\left(2cm^2f^2-\frac{m^2}{2}\Delta f^2\right)\langle X,Y\rangle\\&+2m^2f^2\langle AX,AY\rangle+2m^2(Xf)(Yf)+2cm(m-2)f\langle AX,Y\rangle\\&-2mf\langle\ricci(X),AY\rangle-2mf\langle\ricci(Y),AX\rangle-4mf\langle A^2X,AY\rangle\\&+2m(\Hess f)(AX,Y)+2m(\Hess f)(AY,X)\\&+\langle\nabla_Y\tau_2^{\top}(\phi),X\rangle+\langle\nabla_X\tau_2^{\top}(\phi),Y\rangle+2m\langle B(X,Y),\Delta H\rangle.
\end{align*}

Now, using that, in the case of hypersurfaces, $S_2=-(m^2f^2/2)I+2mfA$ and also
\begin{align*}
\langle\Hess f,A^2\rangle=&\langle\Hess f,c(m-1)I+mfA-\ricci \rangle\\=&-c(m-1)\Delta f-\frac{f}{2}\Div\left(\tau_2^{\top}(\phi)\right)+\frac{m^2f}{4}\Delta f^2\\&-m^2f|\grad f|^2-\Div(\ricci(\grad f))+\frac{1}{2}\langle\grad s,\grad f\rangle
\end{align*}
and
$$
\langle H,\Delta H\rangle=\frac{1}{2}\Delta f^2+f^2|A|^2+|\grad f|^2,
$$
a long but straightforward computation leads to the conclusion.
\end{proof}

\begin{remark}\label{rem:T}
Let $M^m$ be a hypersurface in a space form $N^{m+1}(c)$ and consider the operator $T$ on $M$ given by 
$$
T(X)=-\trace(RA)(\cdot,X,\cdot),
$$ 
where 
$$
RA(X,Y,Z)=R(X,Y)AZ-A(R(X,Y)Z),\quad\forall X,Y,Z\in C(TM).
$$
At a point $p\in M$, consider an orthonormal basis $\{e_i\}$ of $T_pM$ such that $Ae_i=\lambda_ie_i$. 

Using the operator $T$ we can write (see \cite{JHC})
\begin{align}\label{eq:T}
4cm^4f^4-4m^3f^3(\trace A^3)-4m^2f^2|A|^2\left(cm-|A|^2\right)=&4m^2f^2\langle T,A\rangle\\\nonumber=&-2m^2f^2\sum(\lambda_i-\lambda_j)^2R_{ijij}.
\end{align}
\end{remark}

The next result, which is obtained by a straightforward computation, comes to further improve the above formula of the Laplacian of $|S_2|^2$.

\begin{lemma}\label{p:3}
Let $M^m$ be a hypersurface in a space form $N^{m+1}(c)$. Then 
\begin{equation}\label{eq:nabla1}
|\nabla S_2|^2=(m-8)m^4f^2|\grad f|^2+4m^2|\nabla A_H|^2
\end{equation}
and, furthermore,
\begin{align}\label{eq:nabla2}
|\nabla S_2|^2=&(m-8)m^4f^2|\grad f|^2+4m^2|A|^2|\grad f|^2+4m^2f^2|\nabla A|^2\\\nonumber&+2m^2\Div\left(|A|^2\grad f^2\right)+2m^2|A|^2\Delta f^2.
\end{align}
\end{lemma}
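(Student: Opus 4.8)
The plan is to differentiate the explicit expression of $S_2$ for a hypersurface. Since $S_2=-(m^2/2)f^2I+2mfA$, covariant differentiation in a direction $X$ gives
$$
\nabla_XS_2=-m^2f(Xf)\,I+2m(Xf)\,A+2mf\,\nabla_XA,
$$
so computing $|\nabla S_2|^2=\sum_i|\nabla_{X_i}S_2|^2$ with respect to a local orthonormal frame $\{X_i\}$ is just a matter of expanding the square of this three-term sum and summing over $i$. The three pure squares produce $m^5f^2|\grad f|^2$ (using $|I|^2=m$), $4m^2|A|^2|\grad f|^2$, and $4m^2f^2|\nabla A|^2$.

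For the cross terms I would invoke only three elementary identities: $\trace A=mf$, which turns $\langle I,A\rangle$ into $mf$; the total symmetry of $\langle(\nabla A)(\cdot,\cdot),\cdot\rangle$ together with $\Div A=m\grad f$ from Lemma~\ref{lemmaA}, which gives $\trace(\nabla_{X_i}A)=\langle\Div A,X_i\rangle=m(X_if)$; and $\langle A,\nabla_{X_i}A\rangle=\frac{1}{2}X_i|A|^2$. With these, the $I$--$A$ cross term contributes $-4m^4f^2|\grad f|^2$, the $I$--$\nabla A$ cross term another $-4m^4f^2|\grad f|^2$, and the $A$--$\nabla A$ cross term $4m^2f\langle\grad f,\grad|A|^2\rangle$. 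Collecting everything yields the intermediate identity
$$
|\nabla S_2|^2=(m-8)m^4f^2|\grad f|^2+4m^2|A|^2|\grad f|^2+4m^2f^2|\nabla A|^2+4m^2f\langle\grad f,\grad|A|^2\rangle.
$$

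From here, to reach \eqref{eq:nabla1} I would observe that $H=f\eta$ forces $A_H=fA$, hence $\nabla_XA_H=(Xf)A+f\nabla_XA$ and $|\nabla A_H|^2=|A|^2|\grad f|^2+f^2|\nabla A|^2+f\langle\grad f,\grad|A|^2\rangle$; multiplied by $4m^2$ this is exactly the sum of the last three terms above, which gives the compact form. To reach \eqref{eq:nabla2} I would instead rewrite that last term as $2m^2\langle\grad f^2,\grad|A|^2\rangle$ and use $\Div(|A|^2\grad f^2)=\langle\grad|A|^2,\grad f^2\rangle-|A|^2\,\Delta f^2$ (recalling $\Delta=-\Div\grad$ on functions), so that $4m^2f\langle\grad f,\grad|A|^2\rangle=2m^2\Div(|A|^2\grad f^2)+2m^2|A|^2\Delta f^2$.

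There is no real conceptual difficulty in this lemma; the only things to watch are the bookkeeping of the six cross terms, the sign convention for $\Delta$, and the need to invoke parts (3) and (4) of Lemma~\ref{lemmaA} — without the total symmetry of $\nabla A$ the trace $\trace(\nabla_{X_i}A)$ would not collapse to $m(X_if)$ and the two $-4m^4f^2|\grad f|^2$ contributions would be missed. The small piece of ``insight'' is simply to recognize that, depending on one's purpose, the three remaining terms reassemble either into $4m^2|\nabla A_H|^2$ or into a divergence plus a Laplacian term, so that both stated forms fall out of the same computation.
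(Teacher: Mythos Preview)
Your argument is correct and is precisely the ``straightforward computation'' the paper invokes without giving any details. One minor point: $\trace(\nabla_{X_i}A)=X_i(\trace A)=m(X_if)$ already follows from the fact that contraction commutes with covariant differentiation, so the total symmetry of $\nabla A$ is not strictly needed for that step (though your route via $\Div A$ and Lemma~\ref{lemmaA} works just as well).
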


The following two results are partial answers to Conjecture \ref{conj2}. 

\begin{proposition}\label{p:4}
Let $\phi:M^m\rightarrow\mathbb{S}^{m+1}$ be a compact proper-biharmonic hypersurface in the Euclidean sphere. If the scalar curvature $s$ of $M$ is constant, and
$$
mf^2\leq f(\trace A^3), 
$$
then $M$ is either $\mathbb{S}^m(1/\sqrt{2})$ or the product $\mathbb{S}^{m_1}(1/\sqrt{2})\times\mathbb{S}^{m_2}(1/\sqrt{2})$, $m_1+m_2=m$, $m_1\neq m_2$.
\end{proposition}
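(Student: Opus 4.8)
The plan is to integrate the Simons-type identity \eqref{eq:delta1} over the compact manifold $M$ and to use biharmonicity, the constancy of $s$, and the sign hypothesis to force $\nabla A=0$, after which the conclusion follows from the known classification of hypersurfaces of $\mathbb{S}^{m+1}$ with parallel shape operator.

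I would begin by recording what the hypotheses give. Since $\phi$ is proper-biharmonic into $\mathbb{S}^{m+1}$ we have $c=1$, $\tau_2(\phi)=0$, hence $\tau_2^{\top}(\phi)=0$, and the normal part of the biharmonic equation reads $\Delta f=f(m-|A|^2)$. By the Gauss equation $s=m(m-1)+m^2f^2-|A|^2$, so constancy of $s$ is equivalent to $\grad|A|^2=m^2\grad f^2=2m^2f\grad f$. Now integrate \eqref{eq:delta1}: every exact divergence and every full Laplacian has zero integral, the term $4m^2f\langle\grad s,\grad f\rangle$ vanishes, all terms containing $\tau_2^{\top}(\phi)$ vanish, and $\int_M|A|^2\,\Delta f^2=\int_M\langle\grad|A|^2,\grad f^2\rangle=4m^2\int_M f^2|\grad f|^2$. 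Substituting \eqref{eq:nabla2} for $\int_M|\nabla S_2|^2$ (whose divergence term also drops and in which the same rewriting of $\int_M|A|^2\Delta f^2$ is used), the $|A|^2|\grad f|^2$- and $|A|^2\Delta f^2$-contributions cancel, and one is left with
\[
0=-8m^4\int_M f^2|\grad f|^2-4m^2\int_M f^2|\nabla A|^2+\int_M\Big(4m^4f^4-4m^3f^3\trace A^3-4m^2f^2|A|^2(m-|A|^2)\Big).
\]

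Next I would estimate the last integral. The hypothesis $mf^2\le f\trace A^3$ gives $m^2f^2-mf\trace A^3\le 0$ pointwise, so $\int_M(4m^4f^4-4m^3f^3\trace A^3)\le 0$. For the remaining piece, integrate by parts using $\Delta f=f(m-|A|^2)$ and $\grad|A|^2=2m^2f\grad f$:
\[
\int_M f^2|A|^2(m-|A|^2)=\int_M (f|A|^2)\,\Delta f=\int_M\langle\grad(f|A|^2),\grad f\rangle=\int_M|A|^2|\grad f|^2+2m^2\int_M f^2|\grad f|^2.
\]
Feeding this back in, the displayed identity yields
\[
16m^4\int_M f^2|\grad f|^2+4m^2\int_M f^2|\nabla A|^2+4m^2\int_M|A|^2|\grad f|^2\le 0 ,
\]
so all three non-negative integrals vanish. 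From $\int_M f^2|\grad f|^2=0$ we get $f^2|\grad f|^2\equiv 0$; on the open set $\{\grad f\neq 0\}$ this would force $f\equiv 0$, hence $\grad f\equiv 0$ there, a contradiction, so $\grad f\equiv 0$ on connected $M$ and $f$ is constant. It is a non-zero constant because $M$ is not minimal, and then $\int_M f^2|\nabla A|^2=0$ gives $\nabla A=0$.

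To finish, note that a CMC proper-biharmonic hypersurface of $\mathbb{S}^{m+1}$ satisfies $|A|^2=m$, while $\nabla A=0$ forces the (necessarily constant) principal curvatures to be at most two in number. A hypersurface of $\mathbb{S}^{m+1}$ with parallel shape operator is (an open part of) a totally umbilical $\mathbb{S}^m(r)$ or a standard product $\mathbb{S}^{m_1}(r_1)\times\mathbb{S}^{m_2}(r_2)$; imposing $|A|^2=m$ and non-minimality pins down $r=1/\sqrt2$ in the first case and $r_1=r_2=1/\sqrt2$ with $m_1\neq m_2$ in the second, which is exactly the statement. I expect the main obstacle to be the second paragraph: arranging the many terms of \eqref{eq:delta1} and \eqref{eq:nabla2} so that the $|A|^2|\grad f|^2$ and $|A|^2\Delta f^2$ contributions cancel, and recognizing that the leftover curvature integrand is precisely the combination turned non-positive by the biharmonic integration-by-parts identity; the algebra elsewhere is routine and the final step is a citation.
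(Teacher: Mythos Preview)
Your proof is correct, but it follows a different route from the paper's. The paper's argument is very short: it invokes the Maeta--Ou result \cite{MO} at the outset, which says that a compact proper-biharmonic hypersurface of $\mathbb{S}^{m+1}$ with constant scalar curvature is automatically CMC. With $f$ constant, integrating \eqref{eq:delta1} and using \eqref{eq:nabla1} collapses immediately to
\[
\int_M\bigl\{m^2f^4-mf^3(\trace A^3)\bigr\}=\int_M|\nabla A_H|^2,
\]
and the hypothesis $mf^2\le f\trace A^3$ makes the left side nonpositive, forcing $\nabla A_H=0$, hence $\nabla A=0$.

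You instead avoid the citation to \cite{MO} entirely: you integrate \eqref{eq:delta1} with \eqref{eq:nabla2} (equivalently, you integrate \eqref{eq:delta}) while $f$ is still allowed to vary, and then use the normal biharmonic equation $\Delta f=f(m-|A|^2)$ together with the Gauss relation $\grad|A|^2=2m^2f\grad f$ to rewrite $\int_M f^2|A|^2(m-|A|^2)$ as a sum of $|\grad f|^2$-terms. This produces three nonnegative integrals on one side and a nonpositive quantity on the other, so all vanish; from $f^2|\grad f|^2\equiv 0$ you recover CMC, and then $\nabla A=0$. The trade-off is clear: the paper's proof is a one-line reduction plus a citation, while yours is fully self-contained and in effect reproves the relevant piece of \cite{MO} along the way. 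Both end with the same classification of proper-biharmonic hypersurfaces with $\nabla A=0$.
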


\begin{proof} Since $M$ is a compact biharmonic hypersurface with constant scalar curvature, we have, using Proposition \ref{p:delta}, Equation \eqref{eq:nabla1}, and \cite{MO},
\begin{align}
\int_M\left\{m^2f^4-mf^3(\trace A^3)\right\}=\int_{M}|\nabla A_H|^2.
\end{align}
It follows that $\nabla A_H=0$, which, for non-minimal hypersurfaces, is equivalent to $\nabla A=0$. We conclude using \cite{CMO1,GYJ}, where all proper-biharmonic hypersurfaces satisfying $\nabla A=0$ were determined.
\end{proof}

It is easy to see that, for a CMC biharmonic hypersurface of the Euclidean sphere, $\riem^M\geq 0$ implies $mf^2\leq f(\trace A^3)$, and then one obtains the following corollary.

\begin{corollary}\label{cor3final}
Let $\phi:M^m\rightarrow\mathbb{S}^{m+1}$ be a compact proper-biharmonic hypersurface with constant scalar curvature and $\riem^M\geq 0$. Then $M$ is either $\mathbb{S}^m(1/\sqrt{2})$ or the product $\mathbb{S}^{m_1}(1/\sqrt{2})\times\mathbb{S}^{m_2}(1/\sqrt{2})$, $m_1+m_2=m$, $m_1\neq m_2$.
\end{corollary}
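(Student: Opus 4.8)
The plan is to reduce the statement to the constant‑mean‑curvature situation already settled in Proposition~\ref{p:4}. First I would invoke \cite{MO}: since $M$ is a compact proper‑biharmonic hypersurface of $\mathbb{S}^{m+1}$ with constant scalar curvature, its mean curvature function $f$ is a nonzero constant. Equivalently, as recalled in the Introduction (the biharmonicity condition for $c=1$), $M$ is CMC with $|A|^2=m$.

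The second step is to convert the curvature hypothesis into the algebraic inequality required by Proposition~\ref{p:4}. I would substitute $c=1$ and $|A|^2=m$ into identity~\eqref{eq:T} of Remark~\ref{rem:T}. The term $-4m^2f^2|A|^2(cm-|A|^2)$ then vanishes, and dividing what remains by $4m^3f^2$ (legitimate since $f\neq 0$) yields
\[
mf^2-f(\trace A^3)=-\frac{1}{2m}\sum_{i,j}(\lambda_i-\lambda_j)^2R_{ijij},
\]
where $\{e_i\}$ diagonalises $A$ at the point under consideration and the $R_{ijij}$ are the corresponding sectional curvatures of $M$. Since $\riem^M\geq 0$, the right‑hand side is nonpositive at every point of $M$, hence $mf^2\leq f(\trace A^3)$ everywhere on $M$.

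Finally, $M$ now meets all the hypotheses of Proposition~\ref{p:4}: it is compact, proper‑biharmonic, has constant scalar curvature, and satisfies $mf^2\leq f(\trace A^3)$. That proposition therefore gives $\nabla A=0$ and identifies $M$ as (an open part of) $\mathbb{S}^m(1/\sqrt{2})$ or $\mathbb{S}^{m_1}(1/\sqrt{2})\times\mathbb{S}^{m_2}(1/\sqrt{2})$ with $m_1\neq m_2$, which is the assertion.

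As for difficulty: essentially everything is delegated. The one substantive external input is the CMC reduction of \cite{MO}; the curvature‑to‑inequality passage is a one‑line computation off Remark~\ref{rem:T}, and the geometric conclusion is exactly Proposition~\ref{p:4}. The only point that merits a word of care is that \eqref{eq:T} is applied pointwise with $A$ diagonalised at that point, so $\riem^M\geq 0$ is being used only on the $2$‑planes spanned by principal directions — which is of course contained in the stated hypothesis.
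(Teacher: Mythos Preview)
Your argument is correct and matches the paper's own approach: the text preceding the corollary states exactly that for a CMC biharmonic hypersurface of $\mathbb{S}^{m+1}$ the condition $\riem^M\geq 0$ implies $mf^2\leq f(\trace A^3)$, after which Proposition~\ref{p:4} applies. Your explicit derivation of this inequality via identity~\eqref{eq:T} with $c=1$ and $|A|^2=m$ (the latter following from \cite{MO} and the biharmonicity condition) is precisely the computation the paper leaves implicit.
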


\begin{remark} 
Note that if $M^m$ is a constant-scalar-curvature compact proper-biharmonic hypersurface in $\mathbb{S}^{m+1}$, then we have the following constraint (see \cite{O})
$$
s\in\left(m(m-2),2m(m-1)\right].
$$
In \cite{CY}, compact hypersurfaces $M^m$ in $\mathbb{S}^{m+1}$ with $\riem^M\geq 0$ and constant scalar curvature $s\geq m(m-1)$ were classified. If we ask $M$ to satisfy the hypotheses in Corollary \ref{cor3final} it does not necessarily follows that $s\geq m(m-1)$, but only $s> m(m-2)$.
\end{remark}

\begin{remark}
In the non-compact case, it is proved that a constant-scalar-curvature proper-biharmonic hypersurface of the Euclidean sphere with less than seven distinct principal curvatures must be CMC (see \cite{FH}).
\end{remark}

From Proposition \ref{p:delta} and the second equation in Lemma \ref{p:3} we obtain a further formula for the Laplacian of $|S_2|^2$.

\begin{theorem}\label{th:1}
Let $\phi:M^m\rightarrow N^{m+1}(c)$ be a hypersurface in a space form. Then
\begin{align}\label{eq:delta}
\frac{1}{2}\Delta |S_2|^2=&4cm^4f^4-4m^3f^3(\trace A^3)-4m^2f^2|A|^2(cm-|A|^2)\\\nonumber
&-8m^4f^2|\grad f|^2-4m^2f^2|\nabla A|^2\\\nonumber &+4m^2f\langle\grad s,\grad f\rangle\\\nonumber&-8m^2\Div(f\ricci(\grad f))-2m^2\Div\left(|A|^2\grad f^2\right)\\\nonumber & +\frac{m^5}{8}\Delta f^4-4cm^2(m-1)\Delta f^2-10m^2f\langle\tau_2^{\top}(\phi),\grad f\rangle\\\nonumber &-4m^2f^2\Div\left(\tau_2^{\top}(\phi)\right)-2\left|\tau_2^{\top}(\phi)\right|^2+4mf\langle\nabla\tau_2^{\top}(\phi),A\rangle.
\end{align}
\end{theorem}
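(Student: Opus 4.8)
The plan is to obtain \eqref{eq:delta} by combining the two formulas already in hand: the long expression \eqref{eq:delta1} for $\tfrac12\Delta|S_2|^2$ from Proposition \ref{p:delta}, and the second identity \eqref{eq:nabla2} for $|\nabla S_2|^2$ from Lemma \ref{p:3}. The point is that \eqref{eq:delta1} contains the term $-|\nabla S_2|^2$, which is left unexpanded there, so the whole content of the theorem is to substitute \eqref{eq:nabla2} into \eqref{eq:delta1} and simplify. Concretely, I would write
\[
-|\nabla S_2|^2=-(m-8)m^4f^2|\grad f|^2-4m^2|A|^2|\grad f|^2-4m^2f^2|\nabla A|^2-2m^2\Div\!\left(|A|^2\grad f^2\right)-2m^2|A|^2\Delta f^2
\]
and insert this in place of $-|\nabla S_2|^2$ in \eqref{eq:delta1}.

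The key bookkeeping step is then to collect the terms that appear in both expressions. In \eqref{eq:delta1} the relevant ones are $+m^4(m-16)f^2|\grad f|^2$, $+4m^2|A|^2|\grad f|^2$ and $+2m^2|A|^2\Delta f^2$; adding the corresponding contributions from $-|\nabla S_2|^2$ gives, for the $f^2|\grad f|^2$ coefficient, $m^4(m-16)-(m-8)m^4=m^4(m-16-m+8)=-8m^4$, while the two $4m^2|A|^2|\grad f|^2$ terms cancel and the two $2m^2|A|^2\Delta f^2$ terms cancel. What survives from the substitution is the new term $-4m^2f^2|\nabla A|^2$ and the new divergence term $-2m^2\Div(|A|^2\grad f^2)$. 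All the remaining terms of \eqref{eq:delta1} — the curvature terms $4cm^4f^4-4m^3f^3(\trace A^3)-4m^2f^2|A|^2(cm-|A|^2)$, the terms $4m^2f\langle\grad s,\grad f\rangle$, $-8m^2\Div(f\ricci(\grad f))$, $\tfrac{m^5}{8}\Delta f^4$, $-4cm^2(m-1)\Delta f^2$, and the four $\tau_2^{\top}(\phi)$ terms — are untouched and reappear verbatim in \eqref{eq:delta}. Matching everything against the stated right-hand side finishes the proof.

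There is essentially no analytic obstacle here; the statement is a corollary of the two preceding results, exactly as the sentence introducing the theorem advertises ("From Proposition \ref{p:delta} and the second equation in Lemma \ref{p:3} we obtain…"). The only thing to be careful about is the arithmetic of the coefficients of the $f^2|\grad f|^2$, $|A|^2|\grad f|^2$ and $|A|^2\Delta f^2$ terms, so I would present that cancellation explicitly and then simply state that the other terms carry over unchanged. Thus the proof is: substitute \eqref{eq:nabla2} for $|\nabla S_2|^2$ in \eqref{eq:delta1}, perform the three coefficient simplifications above, and read off \eqref{eq:delta}.

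\begin{proof}
Substituting the second expression \eqref{eq:nabla2} for $|\nabla S_2|^2$ into the formula \eqref{eq:delta1} of Proposition \ref{p:delta}, the terms
$$
m^4(m-16)f^2|\grad f|^2\quad\text{and}\quad -(m-8)m^4f^2|\grad f|^2
$$
combine into $-8m^4f^2|\grad f|^2$, the terms $4m^2|A|^2|\grad f|^2$ cancel, and the terms $2m^2|A|^2\Delta f^2$ cancel, while the substitution introduces the new terms $-4m^2f^2|\nabla A|^2$ and $-2m^2\Div\left(|A|^2\grad f^2\right)$. All the other terms of \eqref{eq:delta1} are unchanged, and we obtain \eqref{eq:delta}.
\end{proof}
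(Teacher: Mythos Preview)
Your proposal is correct and follows exactly the approach the paper indicates: the theorem is obtained by substituting \eqref{eq:nabla2} into \eqref{eq:delta1}, and your coefficient bookkeeping for the $f^2|\grad f|^2$, $|A|^2|\grad f|^2$, and $|A|^2\Delta f^2$ terms is accurate. The paper itself gives no further details beyond the sentence introducing the theorem, so your write-up in fact supplies more than the original.
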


\begin{remark} From Theorem \ref{th:1} one also obtains, by a straightforward computation, a formula for the Laplacian of the squared norm of the shape operator $A$ for any hypersurface in a space form, a result that generalizes the well-known formula for CMC hypersurfaces in \cite{NS}.
\end{remark}

Theorem \ref{th:1} leads to the next two results.

\begin{theorem} 
Let $\phi:M^m\rightarrow N^{m+1}(c)$ be a constant-scalar-curvature biconservative hypersurface in a space form. Then
\begin{align*}
\frac{3m^2}{2}\Delta f^4=&\ 4f^2\big\{cm^2f^2-mf(\trace A^3)-|A|^2(cm-|A|^2)\\
&-2m^2|\grad f|^2-|\nabla A|^2\big\}.
\end{align*}
\end{theorem}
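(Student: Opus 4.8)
\emph{Proof strategy.} The plan is to specialize formula \eqref{eq:delta} of Theorem~\ref{th:1} to a biconservative hypersurface of constant scalar curvature and then dispose of the remaining Laplacian and divergence terms by exploiting the two hypotheses. Since biconservativity means $\tau_2^{\top}(\phi)=0$ everywhere, the four terms of \eqref{eq:delta} that contain $\tau_2^{\top}(\phi)$ all drop out, and since $s$ is constant the term $4m^2f\langle\grad s,\grad f\rangle$ drops out as well. What survives of the purely algebraic block,
\begin{align*}
4cm^4f^4&-4m^3f^3(\trace A^3)-4m^2f^2|A|^2(cm-|A|^2)\\
&-8m^4f^2|\grad f|^2-4m^2f^2|\nabla A|^2,
\end{align*}
is exactly $m^2$ times the right-hand side of the asserted identity, i.e.\ it equals $4m^2f^2$ times the bracket $\{cm^2f^2-mf(\trace A^3)-|A|^2(cm-|A|^2)-2m^2|\grad f|^2-|\nabla A|^2\}$. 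So the whole claim reduces to the pointwise identity
\begin{align*}
&\tfrac12\Delta|S_2|^2+8m^2\Div(f\ricci(\grad f))+2m^2\Div(|A|^2\grad f^2)\\
&\qquad\qquad-\tfrac{m^5}{8}\Delta f^4+4cm^2(m-1)\Delta f^2=\tfrac{3m^4}{2}\Delta f^4.
\end{align*}

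Two reductions make this transparent. First, the constant-scalar-curvature hypothesis turns $|A|^2$ into a polynomial in $f$: the Gauss equation of a hypersurface of $N^{m+1}(c)$ reads $s=cm(m-1)+m^2f^2-|A|^2$, so $|A|^2=m^2f^2+cm(m-1)-s$; hence $\grad|A|^2=m^2\grad f^2$, and therefore $\Delta(f^2|A|^2)=m^2\Delta f^4+(cm(m-1)-s)\Delta f^2$ and $\Div(|A|^2\grad f^2)=-|A|^2\Delta f^2+m^2|\grad f^2|^2$. Second, biconservativity makes $\grad f$ an eigenvector of $A$: from $\tau_2^{\top}(\phi)=-2mA(\grad f)-\tfrac{m^2}{2}\grad f^2=0$ we get $A(\grad f)=-\tfrac m2 f\grad f$, hence $A^2(\grad f)=\tfrac{m^2}{4}f^2\grad f$, and substituting into $\ricci=c(m-1)I+mfA-A^2$ gives $\ricci(\grad f)=\big(c(m-1)-\tfrac{3m^2}{4}f^2\big)\grad f$, so that $f\ricci(\grad f)=\tfrac{c(m-1)}{2}\grad f^2-\tfrac{3m^2}{16}\grad f^4$ and $\Div(f\ricci(\grad f))=-\tfrac{c(m-1)}{2}\Delta f^2+\tfrac{3m^2}{16}\Delta f^4$, using $\Div\grad=-\Delta$ with the present sign convention.

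It then remains to evaluate $\tfrac12\Delta|S_2|^2$. Since for a hypersurface $S_2=-\tfrac{m^2}{2}f^2I+2mfA$, one has $|S_2|^2=\tfrac{m^4(m-8)}{4}f^4+4m^2f^2|A|^2$, whence $\tfrac12\Delta|S_2|^2=\tfrac{m^4(m-8)}{8}\Delta f^4+2m^2\Delta(f^2|A|^2)$, which by the first reduction equals $\tfrac{m^4(m+8)}{8}\Delta f^4+2m^2(cm(m-1)-s)\Delta f^2$. Substituting these expressions into the left-hand side of the reduced identity, the two families of terms in $\Delta f^2$ (the $c(m-1)$-ones and the $(cm(m-1)-s)$-ones) cancel, and that side simplifies to $\tfrac{5m^4}{2}\Delta f^4-2m^4f^2\Delta f^2+2m^4|\grad f^2|^2$. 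The elementary identity $\Delta(u^2)=2u\Delta u-2|\grad u|^2$ applied with $u=f^2$, i.e.\ $f^2\Delta f^2=\tfrac12\Delta f^4+|\grad f^2|^2$, rewrites $-2m^4f^2\Delta f^2$ as $-m^4\Delta f^4-2m^4|\grad f^2|^2$; the $|\grad f^2|^2$-terms cancel, and one obtains $\tfrac{3m^4}{2}\Delta f^4$, exactly what the reduced identity asserts.

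The computation is routine once this set-up is fixed; the only genuine content is recognizing that constant scalar curvature (via Gauss) is precisely what forces $|A|^2$ — and hence $\Delta(f^2|A|^2)$ and $\Div(|A|^2\grad f^2)$ — to be expressible through $f$ alone, while biconservativity is precisely what forces $\ricci(\grad f)$ to be a multiple of $\grad f$. The step most prone to slips is the bookkeeping of the numerous numerical coefficients, together with the sign convention $\Delta=-\Div\grad$, in assembling these pieces; note also that the whole argument is pointwise, so neither compactness nor a density argument is required.
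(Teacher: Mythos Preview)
Your proof is correct and follows exactly the route the paper intends: the paper states that this theorem ``follows from Theorem~\ref{th:1}'' without further detail, and what you have written is precisely the computation that specializes \eqref{eq:delta} under $\tau_2^{\top}(\phi)=0$ and $\grad s=0$, using the Gauss equation $|A|^2=m^2f^2+cm(m-1)-s$ and the biconservative eigenvector relation $A(\grad f)=-\tfrac{m}{2}f\grad f$ to collapse the remaining divergence and Laplacian terms. All the numerical bookkeeping checks out.
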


\begin{corollary}
Let $\phi:M^m\rightarrow \mathbb{S}^{m+1}$ be a biharmonic hypersurface with constant scalar curvature. Then the following system holds
\begin{align}\label{system}
\begin{cases}
\frac{3m^2}{2}\Delta f^4=&4f^2\big\{m^2f^2-mf(\trace A^3)-|A|^2(m-|A|^2)\\
&-2m^2|\grad f|^2-|\nabla A|^2\big\}\\
\quad\quad\Delta f=&f(m-|A|^2).
\end{cases}
\end{align}
\end{corollary}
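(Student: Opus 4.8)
The plan is to obtain this Corollary directly from the preceding Theorem combined with the splitting of the biharmonic equation recalled in the Preliminaries, specialized to $c=1$. First I would note that a biharmonic hypersurface is automatically biconservative: the bitension field $\tau_2(\phi)$ vanishes identically, hence so does its tangent component $\tau_2(\phi)^{\top}$, which is precisely the biconservativity condition. Since $\mathbb{S}^{m+1}$ is the space form $N^{m+1}(1)$ and $M^m$ is assumed to have constant scalar curvature, the previous Theorem applies with $c=1$ and yields the first equation of the system verbatim.

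For the second equation I would invoke the normal part of the decomposition of $\tau_2(\phi)=0$ for a hypersurface $M^m$ in $N^{m+1}$, namely $\Delta f + f|A|^2 - f\,\ricci^N(\eta,\eta)=0$, as recorded just before Section 3. For the space form $N^{m+1}(c)$ one has $\ricci^N = cm\,\langle\cdot,\cdot\rangle$, so $\ricci^N(\eta,\eta)=cm=m$ when $c=1$; substituting gives $\Delta f = f(m-|A|^2)$, which is the second equation. Assembling the two equations produces the displayed system.

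There is essentially no obstacle here: all the analytic content sits in the earlier Theorem, and the Corollary is a specialization ($c=1$) together with the standard normal biharmonic equation. The only points meriting a moment's care are to confirm that the biconservativity hypothesis of the Theorem is satisfied — immediate from $\tau_2(\phi)=0$ — and, if one wishes to allow minimal hypersurfaces under the term ``biharmonic'', to observe that the system holds trivially when $f\equiv 0$.
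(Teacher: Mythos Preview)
Your proposal is correct and matches the paper's approach exactly: the corollary is stated without proof in the paper, as it follows immediately from the preceding theorem with $c=1$ (using that biharmonic implies biconservative) together with the normal component $\Delta f+f|A|^2-f\ricci^N(\eta,\eta)=0$ of the biharmonic equation and $\ricci^N(\eta,\eta)=m$ in $\mathbb{S}^{m+1}$.
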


\begin{remark} Since $\Delta f^4=f^3\Delta f-12|\grad f|^2$, a consequence of the last corollary is that a biharmonic hypersurface in the Euclidean sphere with constant scalar curvature satisfies
\begin{align*}
\begin{cases}
\frac{3m^2}{2}\Delta f^4=&4f^2\big\{m^2f^2-mf(\trace A^3)-|A|^2(m-|A|^2)\\&-2m^2|\grad f|^2-|\nabla A|^2\big\}\\
\quad\ \ \Delta f^4=&4f^4(m-|A|^2)-12f^2|\grad f|^2.
\end{cases}
\end{align*}
\end{remark}

The next rigidity result is an almost direct application of the Simons type formula \eqref{eq:delta}.

\begin{theorem}\label{thm:main}
Let $\phi:M^m\rightarrow N^{m+1}(c)$ be a compact biconservative hypersurface in a space form $N^{m+1}(c)$, with $c\in\{-1,0,1\}$. If $M$ is not minimal, has constant scalar curvature and $\riem^M\geq 0$, then $M$ is either 
\begin{enumerate}

\item $\mathbb{S}^m(r)$, $r>0$, if $c\in\{-1,0\}$, i.e., $N$ is either the hyperbolic space $\mathbb{H}^{m+1}$ or the Euclidean space $\mathbb{E}^{m+1}$; or

\item $\mathbb{S}^m(r)$, $r\in(0,1)$, or the product $\mathbb{S}^{m_1}(r_1)\times\mathbb{S}^{m_2}(r_2)$, where $r_1^2+r_2^2=1$, $m_1+m_2=m$, and $r_1\neq\sqrt{m_1/m}$, if $c=1$, i.e., $N$ is the Euclidean sphere $\mathbb{S}^{m+1}$.
\end{enumerate}
\end{theorem}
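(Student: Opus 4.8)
The plan is to substitute the two structural hypotheses into the Simons type formula \eqref{eq:delta} of Theorem~\ref{th:1} and then integrate over the compact manifold. Biconservativity gives $\tau_2^{\top}(\phi)=0$, which annihilates the last five terms of \eqref{eq:delta} (all those carrying a factor $\tau_2^{\top}(\phi)$ or $\nabla\tau_2^{\top}(\phi)$), while constant scalar curvature kills $4m^2f\langle\grad s,\grad f\rangle$. What is left is
\begin{align*}
\tfrac12\Delta|S_2|^2 =&\ 4cm^4f^4-4m^3f^3(\trace A^3)-4m^2f^2|A|^2(cm-|A|^2)-8m^4f^2|\grad f|^2\\
&-4m^2f^2|\nabla A|^2-8m^2\Div\!\big(f\ricci(\grad f)\big)-2m^2\Div\!\big(|A|^2\grad f^2\big)\\
&+\tfrac{m^5}{8}\Delta f^4-4cm^2(m-1)\Delta f^2.
\end{align*}
Integrating over $M$ discards every divergence and every Laplacian term, and rewriting the first line via the identity \eqref{eq:T} of Remark~\ref{rem:T} yields
$$
0=\int_M\Big\{-2m^2f^2\!\sum_{i,j}(\lambda_i-\lambda_j)^2R_{ijij}-8m^4f^2|\grad f|^2-4m^2f^2|\nabla A|^2\Big\}.
$$

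Next I would invoke the curvature sign: $\riem^M\geq 0$ gives $R_{ijij}\geq 0$, so each of the three summands in the integrand is pointwise nonpositive; since their integral vanishes, each must vanish identically. In particular $f^2|\grad f|^2\equiv 0$ and $f^2|\nabla A|^2\equiv 0$. From $|\grad f^2|^2=4f^2|\grad f|^2\equiv 0$ the function $f^2$ is constant on the connected $M$, and as $M$ is not minimal this constant is strictly positive; hence $f$ is a nonzero constant and $M$ is CMC. Then $f\neq 0$ forces $\nabla A=0$.

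It then remains to identify the compact hypersurfaces with parallel shape operator in each of the three ambient geometries, which is exactly the classification recalled earlier (see \cite{L,R}): such a hypersurface has at most two distinct, necessarily constant, principal curvatures and is either totally umbilical or an extrinsic product of two space forms. Imposing compactness and non-minimality, when $c\in\{-1,0\}$ only the geodesic spheres $\mathbb{S}^m(r)$ remain — the product models in $\mathbb{H}^{m+1}$ and $\mathbb{E}^{m+1}$ being noncompact, since by the Gauss equation one of their factors would have to be flat — and when $c=1$ one is left with the small spheres $\mathbb{S}^m(r)$, $r\in(0,1)$ (the value $r=1$ being totally geodesic, hence minimal), together with the products $\mathbb{S}^{m_1}(r_1)\times\mathbb{S}^{m_2}(r_2)$, $r_1^2+r_2^2=1$, among which the minimal one is precisely $r_1=\sqrt{m_1/m}$ and so must be excluded. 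The integration part of the argument is routine once \eqref{eq:delta} is available; the delicate point is this last step, where one must match the abstract list of parallel-shape-operator hypersurfaces against compactness in all three cases and pin down the exact non-minimality restrictions ($r<1$ in the round case, $r_1\neq\sqrt{m_1/m}$ in the product case).
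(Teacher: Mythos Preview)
Your argument is correct and follows the same route as the paper's proof: integrate \eqref{eq:delta} using biconservativity and constant scalar curvature, apply \eqref{eq:T} together with $\riem^M\geq 0$ to force $f^2|\grad f|^2=f^2|\nabla A|^2=0$, deduce that $M$ is CMC with $\nabla A=0$, and invoke \cite{L,R}. Two trivial slips that do not affect the argument: there are four $\tau_2^{\top}$-terms in \eqref{eq:delta}, not five, and for $c=-1$ the noncompact factor in the product model need not be flat (it may be hyperbolic), though your noncompactness conclusion still holds.
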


\begin{proof} Integrating Equation \eqref{eq:delta} over $M$ with $s$ constant, we have
\begin{align}\label{eq:th1}
\int_{M}\left\{4cm^4f^4-4m^3f^3(\trace A^3)-4m^2f^2|A|^2\left(cm-|A|^2\right)\right\}=&\int_{M}\big\{8m^4f^2|\grad f|^2\\\nonumber&+4m^2f^2|\nabla A|^2\big\}\geq 0.
\end{align}

Since $\riem^M\geq 0$, Equations \eqref{eq:T} and \eqref{eq:th1} forces $f^2|\grad f|^2=0$ and $\nabla A=0$, which implies $T=0$. Therefore, $M$ is a CMC hypersurface with $\nabla A=0$ and we conclude using \cite{L,R}.
\end{proof}

\section{A Bochner type formula and applications}

The results related to Conjecture \ref{conj2} obtained in the previous section rely heavily on the constant scalar curvature hypothesis. To avoid this condition we will first prove a proposition inspired by \cite{MSY}, and based on a non-linear Bochner type formula using a $4$-tensor defined on a Riemannian manifold $M$:
$$
Q(X,Y,Z,W)=\langle Y,Z\rangle\langle X,W\rangle-\langle X,Z\rangle\langle Y,W\rangle
$$ 
and the map 
$$
\sigma_{24}(X,Y,Z,W)=(X,W,Z,Y),
$$ 
given a symmetric $(1,1)$-tensor $S$, the $1$-form $\theta$ defined as the contraction $C((Q\circ\sigma_{24})\otimes g^{\ast},\nabla S\otimes S))$, where $g$ denotes the metric tensor on $M$ and $g^{\ast}$ is its dual.

The next formula cannot be considered to be of Simons type as we do not compute a Laplacian and the shape operator is not necessarily involved. The formula extends beyond Codazzi tensors as it involves the antisymmetric part of $\nabla S$.

\begin{proposition}\label{p:MSY} On a Riemannian manifold $M$ with curvature tensor $R$ we have
\begin{equation}\label{eq:MSY}
\Div\theta=\langle T,S\rangle+|\Div S|^2-|\nabla S|^2+\frac{1}{2}|W|^2,
\end{equation}
where $T(X)=-\trace(RS)(\cdot,X,\cdot)$ and $W(X,Y)=(\nabla_XS)Y-(\nabla_YS)X$.
\end{proposition}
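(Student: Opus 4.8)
The plan is to unwind the definition of $\theta$ in a local orthonormal frame $\{e_i\}$, compute $\Div\theta$ directly at a point by means of a geodesic frame, and extract the curvature term from a single application of the Ricci identity; the first-order terms that remain then reorganize via an elementary algebraic identity for $|W|^2$.

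First I would unwind $\theta$. Performing the contraction $C((Q\circ\sigma_{24})\otimes g^{\ast},\nabla S\otimes S)$ and using that $\langle(\nabla_Z S)\cdot,\cdot\rangle$ is symmetric in its last two arguments (because $S$ is symmetric), one finds that in a local orthonormal frame
\[
\theta(X)=\langle\Div S,SX\rangle-\sum_i\langle(\nabla_{e_i}S)X,Se_i\rangle .
\]
The role of the transposition $\sigma_{24}$ is precisely to make the trace built into $Q$ fall on the derivative and input slots of $\nabla S$ (yielding $\Div S$) rather than on the two slots of $S$ (which would yield $\grad\trace S$ and, after taking the divergence, a term $\Delta|S|^2$); this is why \eqref{eq:MSY} carries no Laplacian, in contrast with the Simons type formulas of the previous section.

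Next I would compute the divergence at a point $p$, using a geodesic frame so that $\Div\theta=\sum_j e_j(\theta(e_j))$ at $p$. The divergence of the $1$-form $X\mapsto\langle\Div S,SX\rangle$ is $\sum_i\langle\nabla_{e_i}(\Div S),Se_i\rangle+|\Div S|^2$, while that of $X\mapsto\sum_i\langle(\nabla_{e_i}S)X,Se_i\rangle$ equals $\sum_{i,j}\langle(\nabla_{e_j}\nabla_{e_i}S)e_j,Se_i\rangle+\sum_{i,j}\langle(\nabla_{e_i}S)e_j,(\nabla_{e_j}S)e_i\rangle$. In the latter I would commute the two covariant derivatives by the Ricci identity, $(\nabla_{e_j}\nabla_{e_i}S-\nabla_{e_i}\nabla_{e_j}S)Z=R(e_j,e_i)SZ-S(R(e_j,e_i)Z)$; the non-curvature part reassembles into $\sum_i\langle\nabla_{e_i}(\Div S),Se_i\rangle$ since $\sum_j(\nabla_{e_i}\nabla_{e_j}S)e_j=\nabla_{e_i}(\Div S)$ at $p$, and the curvature part contributes $\sum_{i,j}\langle R(e_j,e_i)Se_j-S(R(e_j,e_i)e_j),Se_i\rangle=-\langle T,S\rangle$. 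Subtracting the two divergences, the copies of $\sum_i\langle\nabla_{e_i}(\Div S),Se_i\rangle$ cancel and we are left with
\[
\Div\theta=\langle T,S\rangle+|\Div S|^2-\sum_{i,j}\langle(\nabla_{e_i}S)e_j,(\nabla_{e_j}S)e_i\rangle .
\]
Finally, expanding $W(e_i,e_j)=(\nabla_{e_i}S)e_j-(\nabla_{e_j}S)e_i$ and using the symmetry of $S$ gives $|W|^2=2|\nabla S|^2-2\sum_{i,j}\langle(\nabla_{e_i}S)e_j,(\nabla_{e_j}S)e_i\rangle$, hence $\sum_{i,j}\langle(\nabla_{e_i}S)e_j,(\nabla_{e_j}S)e_i\rangle=|\nabla S|^2-\tfrac12|W|^2$, and substitution yields \eqref{eq:MSY}.

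The step I expect to demand the most care is the bookkeeping in the divergence computation: one must check that the two $\sum_i\langle\nabla_{e_i}(\Div S),Se_i\rangle$ contributions really cancel, that a single application of the Ricci identity suffices, and that no term of the form $\Delta|S|^2$ or $\Div(\Div S)$ survives. The whole point of building $\theta$ from $Q\circ\sigma_{24}$ rather than from $Q$ is to arrange exactly these cancellations, so the frame computation should be organized so that they are visible.
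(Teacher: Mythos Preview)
Your proof is correct and follows essentially the same strategy as the paper: compute $\Div\theta$ directly, apply the Ricci commutation identity once to extract the curvature term $\langle T,S\rangle$, and identify the remaining first-order terms. The only difference is bookkeeping: the paper works in local coordinates and keeps the antisymmetric factor $Q^{jikl}=g^{ik}g^{jl}-g^{jk}g^{il}$ intact throughout, so the curvature term emerges in two halves (one from the commutation formula, one from the leftover $g^{ik}g^{jl}$ piece, verified via an eigenbasis), whereas you expand $Q$ at the outset to get the explicit two-term expression for $\theta$, which makes the cancellation of the $\sum_i\langle\nabla_{e_i}(\Div S),Se_i\rangle$ contributions and the full curvature term visible in one step. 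Your final $|W|^2$ identity is exactly the same device the paper uses, just invoked at a different point in the computation.
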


\begin{proof} Since we work with tensor products, it seems easier to use local coordinates. This way one can write
$$
(Q\circ\sigma_{24})\otimes g^{\ast}=Q_{ilkj}g^{ab}dx^i\otimes dx^{j}\otimes dx^{k}\otimes dx^{l}\otimes\frac{\partial}{\partial x^a}\otimes\frac{\partial}{\partial x^b},
$$
$$
(\nabla S)\otimes S=(\nabla_{\alpha}S_{\beta}^{\sigma})S_{\gamma}^{\delta}dx^{\alpha}\otimes dx^{\beta}\otimes dx^{\gamma}\otimes\frac{\partial}{\partial x^{\sigma}}\otimes\frac{\partial}{\partial x^{\delta}},
$$
and
$$
(\nabla S)\otimes(\nabla S)=(\nabla_{\alpha}S_{\beta}^{\sigma})(\nabla_{\gamma}S_{\omega}^{\delta})dx^{\alpha}\otimes dx^{\beta}\otimes dx^{\gamma}\otimes dx^{\omega}\otimes\frac{\partial}{\partial x^{\sigma}}\otimes\frac{\partial}{\partial x^{\delta}}.
$$
Therefore, we have
$$
\theta_i=Q_i^{j\ kl}(\nabla_{j}S_{l}^{a})S_{k}^{b}g_{ab}
$$
and then
$$
\theta^i=Q^{jikl}(\nabla_{j}S_{l}^{a})S_{ak}.
$$
Using this expression and the commutation formula for $\nabla^2S$, a straightforward computation leads to
\begin{align*}
\Div\theta=&\ Q^{jikl}\left\{(\nabla_i\nabla_jS_l^a)S_{ak}+(\nabla_jS_l^a)(\nabla_iS_{ak})\right\}\\=&\ Q^{jikl}\left\{(\nabla_i\nabla_jS_l^a)S_{ak}+(\nabla_jS_l^a)(\nabla_kS_{ia})+(\nabla_jS_l^a)(\nabla_iS_{ka}-\nabla_kS_{ia})\right\}\\=&\ \frac{1}{2}Q^{jikl}\{(S_l^bR_{bij}^a-S_b^aR_{lij}^b)S_{ak}+2(\nabla_jS_l^a)(\nabla_kS_{ia})\\&+2(\nabla_jS_l^a)(\nabla_iS_{ka}-\nabla_kS_{ia})\}\\=&\ \frac{1}{2}Q^{jikl}\{(RS(\partial_i,\partial_j,\partial_l))^aS_{ak}+2(\nabla_jS_l^a)(\nabla_kS_{ia})\\&+2(\nabla_jS_l^a)((\nabla_iS_{ka}-\nabla_kS_{ia}))\}.
\end{align*}
Since $Q^{jikl}=g^{ik}g^{jl}-g^{jk}g^{il}$, we get that
$$
\Div\theta=\frac{1}{2}g^{ik}g^{jl}(RS(\partial_i,\partial_j,\partial_l))^aS_{ak}+\frac{1}{2}\langle T,S\rangle+|\Div S|^2-|\nabla S|^2+\frac{1}{2}|W|^2.
$$

Next, let us consider a point $p\in M$ and $\{e_1,\ldots,e_m\}$ a basis at $p$ such that $Se_i=\lambda_ie_i$. Then one obtains
\begin{align*}
g^{ik}g^{jl}(RS(\partial_i,\partial_j,\partial_l))^aS_{ak}=&(g^{jl}S_l^bR_{bij}^a-g^{jl}S_b^aR_{lij}^b)S_a^i\\=&\sum_{i,k}\langle R(e_k,e_i)Se_i-S(R(e_k,e_i)e_i),Se_k\rangle\\=&-\frac{1}{2}\sum_{i,k}(\lambda_i-\lambda_k)^2R(e_k,e_i,e_k,e_i)\\=&\langle T,S\rangle
\end{align*}
and replacing in the expression of $\Div\theta$ we conclude.
\end{proof}

When $M$ is a compact CMC hypersurface in a space form, taking $A$ instead of $S$ in Equation \eqref{eq:MSY}, one obtains a classic formula from \cite{NS}. If $M$ is a biconservative surface, taking $S$ to be $S_2$, we recover \cite[Theorem 6]{LO}. Still with $S$ equals $S_2$, but for biharmonic hypersurfaces in Euclidean spheres, we get the following result.

\begin{proposition}\label{p:MSY1} Let $\phi:M^m\rightarrow \mathbb{S}^{m+1}$ be a compact proper-biharmonic hypersurface with $\riem^M\geq 0$, such that
$$
f^2|\nabla A|^2-|A|^2|\grad f|^2+|A|^2(m-|A|^2)f^2\geq 0.
$$
Then $M$ is either $\mathbb{S}^m(1/\sqrt{2})$ or the product $\mathbb{S}^{m_1}(1/\sqrt{2})\times\mathbb{S}^{m_2}(1/\sqrt{2})$, $m_1+m_2=m$, $m_1\neq m_2$.
\end{proposition}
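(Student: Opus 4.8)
The plan is to apply the Bochner-type formula \eqref{eq:MSY} of Proposition \ref{p:MSY} to the biharmonic stress-energy tensor $S_2$ and then integrate over the compact $M$. Since a proper-biharmonic submanifold is in particular biconservative, $\Div S_2=0$, so \eqref{eq:MSY} with $S=S_2$ reduces to
$$
\Div\theta=\langle T,S_2\rangle-|\nabla S_2|^2+\frac12|W|^2 ,
$$
with $T(X)=-\trace(RS_2)(\cdot,X,\cdot)$ and $W(X,Y)=(\nabla_XS_2)Y-(\nabla_YS_2)X$. I would then rewrite each term on the right for a hypersurface of $\mathbb{S}^{m+1}$, using $S_2=-\frac{m^2f^2}{2}I+2mfA$, the biharmonic equations $\Delta f=f(m-|A|^2)$ and $2A(\grad f)+mf\grad f=0$, Lemma \ref{lemmaA}, Remark \ref{rem:T} and Lemma \ref{p:3}.

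For $\langle T,S_2\rangle$ the essential observation is that $T$ is built from $S_2$, not from $A$: since the eigenvalues of $S_2$ are $-\frac{m^2f^2}{2}+2mf\lambda_i$, the computation in the proof of Proposition \ref{p:MSY} together with Remark \ref{rem:T} give
$$
\langle T,S_2\rangle=-2m^2f^2\sum_{i,j}(\lambda_i-\lambda_j)^2R_{ijij}=4m^4f^4-4m^3f^3(\trace A^3)-4m^2f^2|A|^2(m-|A|^2),
$$
so that $\langle T,S_2\rangle\le 0$ whenever $\riem^M\ge 0$, regardless of the sign of the mean curvature. For $|\nabla S_2|^2$ I would use \eqref{eq:nabla1}, and for $|W|^2$ the fact that the tangential biharmonic equation $2A(\grad f)+mf\grad f=0$ makes $\grad f$ an eigenvector of $A$ with eigenvalue $-\frac{mf}{2}$; in an $A$-eigenframe adapted to $\grad f$ the only non-zero components of $W$ are $W(e_1,e_j)=(e_1f)(2m\lambda_j-m^2f)e_j$, whence $\frac12|W|^2=|\grad f|^2\bigl(4m^2|A|^2-8m^4f^2+m^5f^2\bigr)$. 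Adding the three contributions, the $f^2|\grad f|^2$ terms cancel, and after reorganizing the remaining mixed term $f\langle\grad f,\grad|A|^2\rangle$ into a divergence by means of $\Delta f=f(m-|A|^2)$ I expect to obtain
$$
\Div\bigl(\theta+2m^2|A|^2\grad f^2\bigr)=-2m^2f^2\sum_{i,j}(\lambda_i-\lambda_j)^2R_{ijij}-4m^2\bigl(f^2|\nabla A|^2-|A|^2|\grad f|^2+|A|^2(m-|A|^2)f^2\bigr).
$$
Integrating over $M$ annihilates the left-hand side; since $\riem^M\ge 0$ makes $R_{ijij}\ge 0$ and the hypothesis makes the second bracket non-negative, both summands on the right vanish identically. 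Thus $f^2|\nabla A|^2-|A|^2|\grad f|^2+|A|^2(m-|A|^2)f^2\equiv 0$ on $M$, and $\sum_{i,j}(\lambda_i-\lambda_j)^2R_{ijij}\equiv 0$ on the open dense set $\{f\neq 0\}$ (open dense because, $M$ being non-minimal, $f$ cannot vanish on a non-empty open set).

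It remains to upgrade this to $\nabla A=0$. Consider the open dense set of points near which the number of distinct principal curvatures is locally constant, and suppose $\grad f\neq 0$ at one such point where also $f\neq 0$. There $-\frac{mf}{2}$ is a principal curvature, say $\lambda_1$, with eigenvector $\grad f$, and for every other principal curvature $\lambda_j$ the vanishing of $(\lambda_1-\lambda_j)^2R_{1j1j}$ together with the Gauss relation $R_{1j1j}=1+\lambda_1\lambda_j\ge 0$ forces $\lambda_j\in\{-\frac{mf}{2},\frac{2}{mf}\}$. If $q\ge 1$ is the (locally constant) multiplicity of $\frac{2}{mf}$, then $\trace A=mf$ yields $m^2f^2=\frac{4q}{m+2-q}$, a constant, contradicting $\grad f\neq 0$; and $q=0$ is impossible since it would make all principal curvatures equal to $-\frac{mf}{2}$, whence $\trace A=-\frac{m^2f}{2}=mf$ and $m=-2$. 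Hence $\grad f\equiv 0$, so $f$ is a non-zero constant, $M$ is a CMC proper-biharmonic hypersurface of $\mathbb{S}^{m+1}$, and therefore $|A|^2=m$. Substituting $\grad f=0$ and $|A|^2=m$ into the pointwise identity above leaves $f^2|\nabla A|^2\equiv 0$, i.e.\ $\nabla A=0$, and the classification of proper-biharmonic hypersurfaces of spheres with parallel shape operator from \cite{CMO1,GYJ} completes the proof.

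The main obstacle is the computation of the second paragraph: identifying the correct tensor $T$ (the $f^2$, rather than $f$, weight is precisely what frees the argument from an orientation hypothesis), evaluating $|W|^2$ and $|\nabla S_2|^2$ in the biharmonic setting, and checking the exact cancellations that reduce everything to a divergence plus two sign-definite terms. Once that identity is available, the curvature and linear-algebra argument forcing $f$ to be constant, and then $\nabla A=0$, is short.
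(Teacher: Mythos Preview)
Your approach is essentially the paper's: apply Proposition~\ref{p:MSY} with $S=S_2$, use $\Div S_2=0$, compute $\langle T,S_2\rangle$, $|\nabla S_2|^2$ and $|W|^2$ for a biharmonic hypersurface, and absorb the cross term $\langle\grad f^2,\grad|A|^2\rangle$ into a divergence via $\Delta f=f(m-|A|^2)$; your displayed identity for $\Div(\theta+2m^2|A|^2\grad f^2)$ is exactly what the paper obtains, and integration plus the two sign hypotheses give the same pair of pointwise vanishings.

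The only substantive difference is in the endgame. The paper observes from $\sum f^2(\lambda_i-\lambda_j)^2(1+\lambda_i\lambda_j)=0$ that on $\{f\neq 0\}$ there are at most two distinct principal curvatures, invokes \cite{BMO} to conclude $\grad f=0$ there, and then uses $\Delta f=f(m-|A|^2)$ to get $\Delta f=0$ on all of $M$, hence $f$ constant. Your argument is more self-contained: you identify the two possible eigenvalues explicitly as $-mf/2$ and $2/(mf)$, and the trace relation $m^2f^2=4q/(m+2-q)$ forces $f^2$ locally constant, giving the contradiction directly; then you read off $\nabla A=0$ from the \emph{other} vanishing identity $f^2|\nabla A|^2-|A|^2|\grad f|^2+|A|^2(m-|A|^2)f^2=0$, which is cleaner than the paper's route.

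One point to tighten: the claim that $\{f\neq 0\}$ is dense (``$f$ cannot vanish on a non-empty open set'') is not immediate from non-minimality alone; it needs unique continuation for the elliptic equation $\Delta f=f(m-|A|^2)$. In fact you do not need density at all: once $\grad f=0$ on $\{f\neq 0\}$, each connected component of $\{f\neq 0\}$ carries a nonzero constant value of $f$, hence is closed as well as open in $M$, so equals $M$ by connectedness.
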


\begin{proof} Recall that the biharmonic stress-energy tensor $S_2$ of a hypersurface is given by
$$
S_2=-\frac{m^2f^2}{2}I+2mfA,
$$
and a straightforward computation leads to 
\begin{align*}
|W|^2=\sum_{i,j}|W(X_i,X_j)|^2=&\ 2m^5f^2|\grad f|^2+8m^2|A|^2|\grad f|^2-10m^4f^2|\grad f|^2\\&+8m^3f\langle\grad f, A\grad f\rangle-8m^2|A\grad f|^2,
\end{align*}
where $\{X_i\}$ is a geodesic frame around a point $p\in M$.

From this formula and Lemma \ref{p:3} it follows that
\begin{align*}
\frac{1}{2}|W|^2-|\nabla S_2|^2=&-4m^2f^2|\nabla A|^2-2m^2\langle\grad f^2,\grad |A|^2\rangle\\=&-4m^2f^2|\nabla A|^2-2m^2\Div(|A|^2\grad f^2)-2m^2|A|^2\Delta f^2\\=&-4m^2\left\{f^2|\nabla A|^2-|A|^2|\grad f|^2+|A|^2(m-|A|^2)f^2\right\}\\&-2m^2\Div(|A|^2\grad f^2).
\end{align*}

Next, by integrating \eqref{eq:MSY} on $M$, from the hypotheses, it easily follows that
$$
f^2|\nabla A|^2-|A|^2|\grad f|^2+|A|^2(m-|A|^2)f^2=0
$$
and
\begin{align}\label{eq:eigen}
\sum_{i,j}f^2(\lambda_i-\lambda_j)^2(1+\lambda_i\lambda_j)=0,
\end{align}
where $\lambda_i$ are the principal curvatures of $M$.

Now, from \eqref{eq:eigen} it follows that, on a connected component of $U=\{p\in M|f^2(p)>0\}$, there are at most two distinct principal curvatures, not necessarily constant, and then, since $M$ is biharmonic, we have that $\grad f=0$, and so $\Delta f=0$, on that component and therefore on $U$ (see \cite{BMO}). Let $q\in M$ be a point such that $f(q)=0$. From the normal part of the biharmonic equation \eqref{eq:Jiang}, it can be easily seen that $(\Delta f)(q)=0$, which means that $\Delta f=0$ on $M$. Therefore, $f$ is constant on $M$, i.e., $M$ is a CMC hypersurface with at most two distinct principal curvatures, which implies $|A|^2=m$ and $\nabla A=0$. This concludes the proof.
\end{proof}

In \cite{JHC} it is proved that, for a biharmonic hypersurface $M^m$ in $\mathbb{S}^{m+1}$, we have
\begin{align}\label{eq:JHC}
|\nabla A|^2\geq \frac{m^2(m+26)}{4(m-1)}|\grad f|^2.
\end{align}
Using this inequality, one obtains the following corollary of Proposition \ref{p:MSY1}.

\begin{corollary}
Let $\phi:M^m\rightarrow \mathbb{S}^{m+1}$ be a compact proper-biharmonic hypersurface with $\riem^M\geq 0$, such that
$$
\left(\frac{m^2(m+26)}{4(m-1)}f^2-|A|^2\right)|\grad f|^2+|A|^2(m-|A|^2)f^2\geq 0.
$$
Then $M$ is either $\mathbb{S}^m(1/\sqrt{2})$ or the product $\mathbb{S}^{m_1}(1/\sqrt{2})\times\mathbb{S}^{m_2}(1/\sqrt{2})$, $m_1+m_2=m$, $m_1\neq m_2$.
\end{corollary}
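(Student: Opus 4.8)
The plan is to verify that the pointwise hypothesis of Proposition \ref{p:MSY1} is implied by the assumption made in the statement, after which the conclusion is exactly that of Proposition \ref{p:MSY1} and there is nothing more to do. So the whole proof reduces to one elementary estimate.

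First I would invoke inequality \eqref{eq:JHC}, which holds for every biharmonic hypersurface $M^m$ in $\mathbb{S}^{m+1}$, namely $|\nabla A|^2\geq \frac{m^2(m+26)}{4(m-1)}|\grad f|^2$. Since $f^2\geq 0$ at each point of $M$, multiplying this inequality by $f^2$ preserves its direction and gives $f^2|\nabla A|^2\geq \frac{m^2(m+26)}{4(m-1)}f^2|\grad f|^2$. Substituting this lower bound into the expression appearing in the hypothesis of Proposition \ref{p:MSY1}, I would then write
$$
f^2|\nabla A|^2-|A|^2|\grad f|^2+|A|^2(m-|A|^2)f^2\geq\left(\frac{m^2(m+26)}{4(m-1)}f^2-|A|^2\right)|\grad f|^2+|A|^2(m-|A|^2)f^2,
$$
and the right-hand side is nonnegative at every point of $M$ precisely by the standing assumption of the corollary.

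Hence $M$ satisfies all the hypotheses of Proposition \ref{p:MSY1} (compactness, proper-biharmonicity, $\riem^M\geq 0$, and the displayed curvature inequality), so that proposition applies and forces $M$ to be either $\mathbb{S}^m(1/\sqrt{2})$ or a standard product $\mathbb{S}^{m_1}(1/\sqrt{2})\times\mathbb{S}^{m_2}(1/\sqrt{2})$ with $m_1+m_2=m$ and $m_1\neq m_2$. Since the argument is just a one-line substitution of a known gradient estimate into an already-established result, there is essentially no obstacle; the only point requiring any care is that multiplying \eqref{eq:JHC} by $f^2$ does not reverse the inequality, which is immediate because $f^2\geq 0$.
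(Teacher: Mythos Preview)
Your proposal is correct and matches the paper's approach exactly: the paper states this corollary immediately after recording inequality \eqref{eq:JHC} and simply says ``Using this inequality, one obtains the following corollary of Proposition \ref{p:MSY1},'' without spelling out the one-line substitution that you have written down.
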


In this last part we will use Equation \eqref{eq:MSY} to study biconservative submanifolds with parallel normalized mean curvature vector field in space forms.

A non-minimal submanifold in a Riemannian manifold with the mean curvature vector field parallel in the normal bundle is called a PMC submanifold.

Let now $\phi:M^m\rightarrow N^n$ be a submanifold with mean curvature vector field $H$ such that $H\neq 0$ at any point in $M$. Henceforth we will denote by $h=|H|>0$ the mean curvature of $M$ and by $\eta_0=H/|H|$ a unit normal vector field with the same direction as $H$. If $\eta_0$ is parallel in the normal bundle, i.e., $\nabla^{\perp}\eta_0=0$, the submanifold $M$ is said to have parallel normalized mean curvature vector field and it is then called a PNMC submanifold. It is easy to see that a PNMC submanifold is PMC if and only if it also is CMC.

Now, let us denote $A_0=A_{\eta_0}$ the shape operator of $M$ in the direction $\eta_0$.  We have the following straightforward properties of $A_0$. 

\begin{lemma}\label{l:A0}
Let $\phi:M^m\rightarrow N^n(c)$ be a PNMC submanifold in a space form. Then, the following hold$:$
\begin{enumerate}

\item $A_0$ is symmetric$;$

\item $\nabla A_0$ is symmetric$;$

\item $\langle(\nabla A_0)(\cdot,\cdot),\cdot\rangle$ is totally symmetric$;$

\item $\trace A_0=mh$$;$

\item $\Div A_0=\trace(\nabla A_0)=m\grad h$$.$

\end{enumerate}
\end{lemma}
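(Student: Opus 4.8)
The plan is to verify each of the five assertions directly from the definitions of the shape operator, the second fundamental form, and the Codazzi equation for a submanifold of a space form, specialized to the distinguished normal direction $\eta_0=H/|H|$. First I would record that for any submanifold of a Riemannian manifold the shape operator $A_\xi$ in any fixed normal direction $\xi$ is self-adjoint, since $\langle A_\xi X,Y\rangle=\langle B(X,Y),\xi\rangle$ and $B$ is symmetric; this gives (1) for $\xi=\eta_0$ immediately, with no use of the PNMC hypothesis.

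For (2) and (3), the key input is the Codazzi equation in a space form, which reads $(\nabla^\perp_X B)(Y,Z)=(\nabla^\perp_Y B)(X,Z)$ because the curvature term of $N^n(c)$ is tangential. Writing this out in terms of $A_{\eta_0}$ and using $\nabla^\perp\eta_0=0$, the normal-derivative terms involving $\nabla^\perp_X\eta_0$ and $\nabla^\perp_Y\eta_0$ drop out, so $(\nabla_X A_0)Y=(\nabla_Y A_0)X$, which is (2). Then (3) follows by combining (2) with the self-adjointness of $A_0$ and of $\nabla_X A_0$ in the usual three-term cyclic argument: $\langle(\nabla_X A_0)Y,Z\rangle=\langle(\nabla_Y A_0)X,Z\rangle=\langle(\nabla_Y A_0)Z,X\rangle=\langle(\nabla_Z A_0)Y,X\rangle=\langle(\nabla_X A_0)Z,Y\rangle$, i.e. the expression is symmetric in all three slots. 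These are exactly the steps of Lemma~\ref{lemmaA}(1)--(3) carried over from the hypersurface case, the only new point being the cancellation afforded by $\nabla^\perp\eta_0=0$.

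For (4), I would compute $\trace A_0=\sum_i\langle A_0 X_i,X_i\rangle=\sum_i\langle B(X_i,X_i),\eta_0\rangle=\langle mH,\eta_0\rangle=m\langle H,H/|H|\rangle=m|H|=mh$. Finally (5) is a trace of (3): using a geodesic frame $\{X_i\}$ at a point, $\Div A_0=\sum_i(\nabla_{X_i}A_0)X_i$, and by total symmetry $\langle(\nabla_{X_i}A_0)X_i,Y\rangle=\langle(\nabla_Y A_0)X_i,X_i\rangle$, so $\langle\Div A_0,Y\rangle=\sum_i\langle(\nabla_Y A_0)X_i,X_i\rangle=Y(\trace A_0)=Y(mh)=m\langle\grad h,Y\rangle$; hence $\Div A_0=m\grad h$, and the equality $\Div A_0=\trace(\nabla A_0)$ is just the definition of the divergence of a $(1,1)$-tensor together with symmetry of $\nabla A_0$ from (2).

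There is no real obstacle here: the statement is a routine transfer of the classical hypersurface identities to a submanifold, and the only place where the PNMC hypothesis is genuinely used is in killing the $\nabla^\perp\eta_0$ terms in the Codazzi equation to obtain (2), from which (3) and (5) are purely formal consequences. The mildest care needed is to make sure $h=|H|$ is smooth, which holds since $H\neq0$ everywhere by assumption, so that $\grad h$ in (4) and (5) makes sense.
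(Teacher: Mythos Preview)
Your proof is correct. The paper itself does not supply a proof of this lemma, introducing it only as ``straightforward properties of $A_0$'' in analogy with Lemma~\ref{lemmaA} for hypersurfaces; your argument fills in precisely the standard verifications the authors omit, with the expected use of the PNMC hypothesis $\nabla^\perp\eta_0=0$ to make $A_0$ a Codazzi tensor.
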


We will need the following lemma, that provides an inequality similar to \eqref{eq:JHC}, for the last main result.

\begin{lemma}\label{l:ineq}
Let $\phi:M^m\rightarrow N^n(c)$ be a PNMC biconservative submanifold. Then
\begin{equation}\label{eq:nablaA0}
|\nabla A_0|^2\geq\frac{m^2(m+26)}{4(m-1)}|\grad h|^2.
\end{equation}
\end{lemma}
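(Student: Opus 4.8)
The plan is to follow the blueprint of the classical Simons-type estimate for $|\nabla A|^2$ in terms of $|\grad h|^2$, but replace the scalar shape operator of a hypersurface with the operator $A_0=A_{\eta_0}$ of the PNMC submanifold, exploiting the fact that by Lemma \ref{l:A0} the tensor $A_0$ enjoys exactly the same formal properties (symmetry of $\nabla A_0$, total symmetry of $\langle(\nabla A_0)(\cdot,\cdot),\cdot\rangle$, and $\Div A_0=m\grad h$) that make the hypersurface computation work. Concretely, I would fix a point $p\in M$ and work with a geodesic frame $\{X_i\}$ which at $p$ diagonalizes $A_0$, and decompose $\nabla A_0$ at $p$ by splitting indices according to whether they equal a distinguished index $i_0$ with $X_{i_0}h\neq 0$, exactly as in the proof of \eqref{eq:JHC} in \cite{JHC}.

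The key steps, in order, are as follows. First, write $(\nabla A_0)_{ijk}=\langle(\nabla_{X_i}A_0)X_j,X_k\rangle$ and record that, by total symmetry, $(\nabla A_0)_{ijk}$ is unchanged under any permutation of $i,j,k$, while the trace identity $\sum_i(\nabla A_0)_{iij}=m\,X_jh$ from Lemma \ref{l:A0}(5) plays the role of the Codazzi/CMC constraint. Second, bring in the biconservative hypothesis: for a PNMC submanifold the tangential part of the bitension field vanishes, and — just as the hypersurface relation $\tau_2^{\top}(\phi)=-2mA(\grad f)-(m^2/2)\grad f^2$ was used — this yields an algebraic relation expressing $A_0(\grad h)$ (equivalently certain components $(\nabla A_0)_{ii_0i_0}$, or the combination $\lambda_i\,X_ih$) in terms of $\grad h$ and $h$, which is what produces the sharp constant. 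Third, split $|\nabla A_0|^2=\sum_{i,j,k}(\nabla A_0)_{ijk}^2$ into the block where all three indices equal $i_0$, the blocks with exactly one or two indices off $i_0$, and estimate each block from below: the diagonal terms $(\nabla A_0)_{iii_0}$ are controlled by the trace identity, the terms $(\nabla A_0)_{i_0i_0i_0}$ and $(\nabla A_0)_{ii_0i_0}$ ($i\neq i_0$) by the biconservativity relation, and a Cauchy–Schwarz/counting argument over the $m-1$ remaining directions produces the factor $(m+26)/(4(m-1))$. Finally, one checks the estimate holds at every point by density of the set where $\grad h\neq0$ (it is trivially an equality, $0\ge0$, where $\grad h=0$).

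The main obstacle I expect is the bookkeeping in the third step: reproducing the \emph{sharp} constant $m^2(m+26)/(4(m-1))$ requires care in choosing which components of $\nabla A_0$ to keep and which to bound by zero, and in optimizing the Cauchy–Schwarz step over the off-diagonal directions — this is where the number $26$ (and the $m-1$ in the denominator, reflecting that one direction has been singled out) actually comes from, and getting it right rather than a weaker constant is the delicate point. A secondary subtlety is making sure the biconservativity relation for a PNMC submanifold really does reduce, componentwise, to the same scalar identity used in the hypersurface case; this should follow because $\nabla^{\perp}\eta_0=0$ kills all the normal-bundle terms, so $\tau_2^{\top}(\phi)$ depends on the normal data only through $h$ and $A_0$, but it must be verified rather than assumed. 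Once these two points are settled, the argument is a routine (if lengthy) adaptation of the hypersurface computation behind \eqref{eq:JHC}.
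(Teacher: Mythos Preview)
Your proposal is correct and follows essentially the same route as the paper: derive from biconservativity the relation $A_0(\grad h)=-\tfrac{m}{2}h\,\grad h$, take $E_1=\grad h/|\grad h|$ (so $E_1$ is automatically an $A_0$-eigenvector --- full diagonalization is unnecessary), compute $(\nabla A_0)_{111}=-\tfrac{m}{2}|\grad h|$ and $\sum_{i\ge 2}(\nabla A_0)_{ii1}=\tfrac{3m}{2}|\grad h|$, and keep only these components (together with their three symmetric copies) before applying Cauchy--Schwarz over the $m-1$ indices $i\ge 2$ to obtain $\tfrac{m^2}{4}+\tfrac{3}{m-1}\cdot\tfrac{9m^2}{4}=\tfrac{m^2(m+26)}{4(m-1)}$. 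Two small corrections to your outline: the components $(\nabla A_0)_{ii_0i_0}$ with two $i_0$'s actually vanish and play no role, and no density argument is needed since the inequality is trivial wherever $\grad h=0$.
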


\begin{proof} Since $M$ is biconservative, we have $\Div S_2=0$, which is equivalent to 
$$
\trace(\nabla A_H)=\frac{m}{4}\grad h^2.
$$
We can rewrite this relation as follows. Consider a geodesic frame $\{X_i\}$ around a point $p\in M$. Then, at $p$, one obtains
$$
\sum_i(\nabla(hA_0))(X_i,X_i)=\frac{m}{4}\grad h^2
$$
and then
$$
\sum_i\left((X_ih)A_0+h\nabla_{X_i}A_0\right)(X_i)=\frac{m}{4}\grad h^2,
$$
that is
$$
A_0\grad h+h\Div A_0=\frac{m}{4}\grad h^2.
$$
From the last property in Lemma \ref{l:A0}, it follows that
\begin{equation}\label{eq:biA0}
A_0\grad h=-\frac{m}{2}h\grad h.
\end{equation}

Next, consider a point $p_0\in M$. If $\grad h$ vanishes at $p_0$, the inequality \eqref{eq:nablaA0} obviously holds. Assume that $(\grad h)(p_0)\neq 0$ and then $\grad h$ does not vanish throughout an open neighborhood of $p_0$. On this neighborhood consider an orthonormal frame field $\{E_1=\grad h/|\grad h|,E_2,\ldots,E_m\}$. Then, from \eqref{eq:biA0}, we have
\begin{equation}\label{eq:final1}
A_0E_1=-\frac{m}{2}hE_1.
\end{equation} 

Now, using Equation \eqref{eq:final1} and the fact that $A_0$ is symmetric, one obtains
\begin{align}\label{eq:final11}
\langle(\nabla A_0)(E_1,E_1),E_1\rangle=&\langle\nabla_{E_1}A_0E_1-A_0(\nabla_{E_1}E_1),E_1\rangle\\\nonumber=&\left\langle-\frac{m}{2}\nabla_{E_1}(hE_1)-A_0(\nabla_{E_1}E_1),E_1\right\rangle\\\nonumber=&\left\langle-\frac{m}{2}|\grad h|E_1-\frac{m}{2}h\nabla_{E_1}E_1-A_0(\nabla_{E_1}E_1),E_1\right\rangle\\\nonumber=&-\frac{m}{2}|\grad h|
\end{align}
and then, from the last property in Lemma \ref{l:A0}, we have
\begin{align}\label{eq:final2}
\sum_{i=2}^{m}\left\langle(\nabla A_0)(E_i,E_i),E_1\right\rangle=&\sum_{i=1}^{m}\left\langle(\nabla A_0)(E_i,E_i),E_1\right\rangle-\langle(\nabla A_0)(E_1,E_1),E_1\rangle\\\nonumber =&\langle\Div A_0,E_1\rangle+\frac{m}{2}|\grad h|\\\nonumber=&\frac{3m}{2}|\grad h|.
\end{align}

Finally, using \eqref{eq:final11}, \eqref{eq:final2}, and the third property in Lemma \ref{l:A0}, it follows that
\begin{align*}
|\nabla A_0|^2=&\sum_{i,j=1}^m|(\nabla A_0)(E_i,E_j)|^2=\sum_{i,j,k=1}^m\langle(\nabla A_0)(E_i,E_j),E_k\rangle^2\\\geq&\langle(\nabla A_0)(E_1,E_1),E_1\rangle^2+\sum_{i=2}^m\langle(\nabla A_0)(E_1,E_i),E_i\rangle^2\\&+\sum_{i=2}^m\langle(\nabla A_0)(E_i,E_1),E_i\rangle^2+\sum_{i=2}^m\langle(\nabla A_0)(E_i,E_i),E_1\rangle^2\\=&\frac{m^2}{4}|\grad h|^2+3\sum_{i=2}^m\langle(\nabla A_0)(E_i,E_i),E_1\rangle^2\\\geq&\frac{m^2}{4}|\grad h|^2+\frac{3}{m-1}\left(\sum_{i=2}^m\langle(\nabla A_0)(E_i,E_i),E_1\rangle\right)^2\\=&\frac{m^2(m+26)}{4(m-1)}|\grad h|^2
\end{align*}
and we are finished.
\end{proof}

We are now ready to prove the main result of this section.

\begin{theorem}\label{thm:pmc}
Let $\phi:M^m\rightarrow N^n(c)$ be a compact PNMC biconservative submanifold in a space form with $\riem^M\geq 0$ and $m\leq 10$. Then $M$ is a PMC submanifold and $\nabla A_H=0$.
\end{theorem}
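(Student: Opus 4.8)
The plan is to transplant the hypersurface arguments of Sections~3--4 to this setting by applying the Bochner-type identity \eqref{eq:MSY} of Proposition~\ref{p:MSY}, first to the tensor $A_0$ and then, for the borderline dimension, to $S_2$. I would begin with the reductions available for a PNMC submanifold: since $\tau(\phi)=mH=mh\,\eta_0$ with $\nabla^{\perp}\eta_0=0$, the same computation as for hypersurfaces gives $S_2=-\frac{m^2h^2}{2}\,I+2mh\,A_0$; by Lemma~\ref{l:A0}, $A_0$ is a Codazzi tensor with $\Div A_0=\trace(\nabla A_0)=m\,\grad h$; and biconservativity of $M$ is exactly \eqref{eq:biA0}, $A_0\grad h=-\frac m2\,h\,\grad h$. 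Because $h>0$ everywhere, $M$ is automatically non-minimal, and once $\grad h\equiv 0$ is proved it follows that $M$ is CMC, hence PMC, and that $\nabla A_0=0$, hence $\nabla A_H=h\nabla A_0=0$. So the whole theorem reduces to proving $\grad h\equiv 0$.

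The core step is Proposition~\ref{p:MSY} applied to $S=A_0$. Here $W=0$ (since $\nabla A_0$ is symmetric) and $|\Div A_0|^2=m^2|\grad h|^2$, so integrating \eqref{eq:MSY} over the compact $M$ gives
$$
\int_M|\nabla A_0|^2=\int_M\langle T,A_0\rangle+m^2\int_M|\grad h|^2 ,
$$
where, by the computation closing the proof of Proposition~\ref{p:MSY}, $\langle T,A_0\rangle=-\frac12\sum_{i,j}(\lambda_i-\lambda_j)^2R_{ijij}\le 0$ because $\riem^M\ge 0$. Feeding in the estimate $|\nabla A_0|^2\ge\frac{m^2(m+26)}{4(m-1)}|\grad h|^2$ of Lemma~\ref{l:ineq} (inequality \eqref{eq:nablaA0}) yields
$$
\frac{3m^2(10-m)}{4(m-1)}\int_M|\grad h|^2\le 0 ,
$$
whose coefficient is strictly positive for $m\le 9$; hence $\grad h\equiv 0$ and $\nabla A_0=0$, which settles those dimensions.

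The hard part is the borderline value $m=10$, where this coefficient vanishes and the argument must be refined via the equality case. When $m=10$ one has $\frac{m^2(m+26)}{4(m-1)}=m^2$, so $|\nabla A_0|^2\ge m^2|\grad h|^2$ pointwise; combined with the identity above and $\langle T,A_0\rangle\le 0$ this forces $\langle T,A_0\rangle\equiv 0$ and equality in Lemma~\ref{l:ineq} everywhere. I would then apply \eqref{eq:MSY} to $S=S_2$: since $\Div S_2=0$, and since redoing the computations of Lemma~\ref{p:3} and of Proposition~\ref{p:MSY1} with $(A,f)$ replaced by $(A_0,h)$ --- legitimate because of Lemma~\ref{l:A0} and \eqref{eq:biA0} --- gives
$$
|\nabla S_2|^2-\tfrac12|W|^2=4m^2h^2|\nabla A_0|^2+2m^2\Div\!\big(|A_0|^2\grad h^2\big)+2m^2|A_0|^2\Delta h^2 ,
$$
while the curvature term of \eqref{eq:MSY} for $S_2$ equals $4m^2h^2\langle T,A_0\rangle$ (the eigenvalues of $S_2$ being $-\frac{m^2h^2}{2}+2mh\lambda_i$), integration over $M$ produces
$$
2m^2\int_M|A_0|^2\Delta h^2=4m^2\int_M h^2\big(\langle T,A_0\rangle-|\nabla A_0|^2\big)\le 0 .
$$
On the other hand, the equality discussion in the proof of Lemma~\ref{l:ineq} pins down $\nabla A_0$ at every point where $\grad h\ne 0$: writing $E_1=\grad h/|\grad h|$, all its components vanish except $\langle(\nabla_{E_1}A_0)E_1,E_1\rangle=-\frac m2|\grad h|$ and $\langle(\nabla_{E_i}A_0)E_i,E_1\rangle=\frac{3m}{2(m-1)}|\grad h|$ for $i\ge 2$, and using $A_0E_1=-\frac m2hE_1$ this gives $\langle\grad|A_0|^2,\grad h^2\rangle=\frac{m^2(m+8)}{m-1}h^2|\grad h|^2$. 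Since $\int_M|A_0|^2\Delta h^2=\int_M\langle\grad|A_0|^2,\grad h^2\rangle$ by integration by parts, the last two displays force $\frac{m^2(m+8)}{m-1}\int_M h^2|\grad h|^2\le 0$, so (as $h>0$) $\grad h\equiv 0$; then $\nabla A_0=0$ as well, and the proof is complete.

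To summarise the anticipated difficulties: the genuine obstacle is dimension $m=10$, which is exactly where the first, clean estimate degenerates and one is forced to exploit the full equality case of Lemma~\ref{l:ineq} and combine it with the $S_2$-version of the Bochner formula; everything else is routine, the only nontrivial verification being that the Section-4 identities for $|W|^2$ and $|\nabla S_2|^2$ transfer verbatim from hypersurfaces to PNMC biconservative submanifolds --- which they do, because $S_2$, $A_0$, $\nabla A_0$ and $\Div A_0$ satisfy the same structural relations (Lemma~\ref{l:A0}, \eqref{eq:biA0}) as in the hypersurface case.
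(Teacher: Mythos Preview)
Your argument for $m\le 9$ is exactly the paper's: apply Proposition~\ref{p:MSY} to the Codazzi tensor $A_0$, integrate, and combine the curvature sign with Lemma~\ref{l:ineq}. For the borderline case $m=10$, however, you take a genuinely different route. The paper stays with $A_0$: from the pointwise equalities $\langle T,A_0\rangle\equiv 0$ and $|\nabla A_0|^2=m^2|\grad h|^2$ it writes down the explicit form of $\nabla A_0$ (your formulas~\eqref{eq:formulas} in effect), then plugs these into the commutation identity $\sum_{i,j}\big\langle(\nabla^2A_0)(E_i,E_j,E_i)-(\nabla^2A_0)(E_j,E_i,E_i),A_0E_j\big\rangle=-\langle T,A_0\rangle=0$ and, after a lengthy second-order computation, derives the pointwise equation $\Delta h^2=0$ on $\{\grad h\ne 0\}$, whence $h$ is constant by compactness.

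You instead bring the tensor $S_2$ back into play: applying Proposition~\ref{p:MSY} to $S_2$ (with $\Div S_2=0$) and using the PNMC analogues of Lemma~\ref{p:3} and the $|W|^2$ computation in Proposition~\ref{p:MSY1} (which indeed transfer verbatim, since $S_2=-\tfrac{m^2h^2}{2}I+2mhA_0$ and $\nabla A_0$ is symmetric) gives $\int_M|A_0|^2\Delta h^2=-2\int_M h^2|\nabla A_0|^2$; on the other hand the equality case of Lemma~\ref{l:ineq} lets you evaluate $\langle\grad|A_0|^2,\grad h^2\rangle=\tfrac{m^2(m+8)}{m-1}h^2|\grad h|^2$ (your key computation, which only needs $\sum_{i\ge 2}\langle A_0E_i,E_i\rangle=\tfrac{3m}{2}h$), and integration by parts then forces $\int_M h^2|\grad h|^2=0$. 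This is correct and, in some ways, cleaner: it avoids the long $(\nabla^2A_0)$ bookkeeping, is entirely integral in spirit, and ties the $m=10$ case back to the $S_2$-Bochner formula that motivates the whole section. What the paper's approach buys instead is a sharper local statement ($\Delta h^2=0$ pointwise on $\{\grad h\ne 0\}$, not just in the mean) and independence from the hypersurface identities of Section~4.
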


\begin{proof} First take $S=A_0$ in Proposition \ref{p:MSY} and, since $A_0$ is a Codazzi tensor, by integrating over $M$ and using Lemma \ref{l:A0}, one obtains
\begin{equation}\label{eq:pmc1}
\int_{M}\left\{-\langle T,A_0\rangle+|\nabla A_0|^2\right\}=m^2\int_{M}|\grad h|^2.
\end{equation}

Next, using Inequality \eqref{eq:nablaA0}, we can see that
\begin{equation}\label{eq:pmc2}
\int_{M}\langle T,A_0\rangle\geq\frac{3m^2(10-m)}{4(m-1)}\int_{M}|\grad h|^2.
\end{equation}

But $\langle T, A_0\rangle=-(1/2)\sum_{i,j}(\lambda_i-\lambda_j)^2R(e_i,e_j,e_i,e_j)\leq 0$ at any point $p\in M$, where $\{e_1,\ldots,e_m\}$ is a basis at $p$ such that $A_0e_i=\lambda_ie_i$, and then, from \eqref{eq:pmc2}, it follows that, if $m\leq 9$, then $\grad h=0$, i.e., $h$ is constant and $\langle T, A_0\rangle=0$. Again using \eqref{eq:pmc1} we have that $\nabla A_0=0$ and therefore $\nabla A_H=0$.

When $m=10$, we can see from \eqref{eq:pmc2} that $\langle T,A_0\rangle=0$ and then, from \eqref{eq:pmc1}, that
$$
\int_M|\nabla A_0|^2=100\int_M|\grad h|^2,
$$
which shows that we must have equality in \eqref{eq:nablaA0}.

Consider now the open set $U=\{p\in M|(\grad h)(p)\neq 0\}$ and an arbitrary point $p_0\in U$. We will show that $\Delta h^2=0$ at $p_0$, and therefore on $U$. 

First, on an open neighborhood of $p_0$, we consider an othonormal frame field $\{E_1=\grad h/|\grad h|,E_2,\ldots,E_{10}\}$ and, since $A_0E_1=-5hE_1$, we have
\begin{align}\begin{cases}\label{eq:formulas}
(\nabla A_0)(E_1,E_1)=-5|\grad h|E_1\\ (\nabla A_0)(E_i,E_j)=0,\quad\forall i,j\in\{2,\ldots,10\},\quad i\neq j\\
(\nabla A_0)(E_1,E_i)=\frac{5}{3}|\grad h|E_i,\quad\forall i\in\{2,\ldots,10\}\\
(\nabla A_0)(E_i,E_i)=\frac{5}{3}|\grad h|E_1,\quad\forall i\in\{2,\ldots,10\}.
\end{cases}
\end{align}

From the commutation formula
$$
(\nabla^2A_0)(X,Y,Z)-(\nabla^2A_0)(Y,X,Z)=RA_0(X,Y,Z),
$$
one obtains
$$
\sum_{i=1}^{10}\left\{(\nabla^2A_0)(E_i,Y,E_i)-(\nabla^2A_0)(Y,E_i,E_i)\right\}=-T(Y).
$$
Then, since $\langle T,A_0\rangle=0$, we have
\begin{equation}\label{eq:com}
\sum_{i,j=1}^{10}\left\{\langle(\nabla^2A_0)(E_i,E_j,E_i),A_0E_j\rangle-\langle(\nabla^2A_0)(E_j,E_i,E_i),A_0E_j\rangle\right\}=0.
\end{equation}

After some long but otherwise simple computations, using Equations \eqref{eq:formulas} and $A_0E_1=-5hE_1$, we get the expressions of $(\nabla^2A_0)(E_1,E_1,E_1)$, $(\nabla^2A_0)(E_i,E_1,E_i)$, $(\nabla^2A_0)(E_1,E_j,E_1)$, $(\nabla^2A_0)(E_j,E_j,E_j)$, and $(\nabla^2A_0)(E_i,E_j,E_i)$, with $i,j\neq 1$ and $i\neq j$, and then
\begin{align*}
\sum_{i,j=1}^{10}\langle(\nabla^2A_0)(E_i,E_j,E_i),A_0E_j\rangle=&50h(E_1|\grad h|)+\frac{200}{3}h(\Div E_1)|\grad h|\\&+\frac{10}{3}|\grad h|\sum_{i=2}^{10}\langle\nabla_{E_i}E_1,A_0E_i\rangle
\end{align*}
and
$$
\sum_{i,j=1}^{10}\langle(\nabla^2A_0)(E_j,E_i,E_i),A_0E_j\rangle=-50h(E_1|\grad h|)+10|\grad h|\sum_{i=2}^{10}\langle\nabla_{E_i}E_1,A_0E_i\rangle.
$$

Replacing in Equation \eqref{eq:com}, one obtains
\begin{equation}\label{eq:com2}
15h(E_1|\grad h|)+10h(\Div E_1)|\grad h|-|\grad h|\sum_{i=2}^{10}\langle\nabla_{E_i}E_1,A_0E_i\rangle=0.
\end{equation}

We also have
\begin{align*}
\sum_{i=2}^{10}\langle\nabla_{E_i}E_1,A_0E_i\rangle=&-\sum_{i=2}^{10}\langle E_1,(\nabla A_0)(E_i,E_i)+A_0(\nabla_{E_i}E_i)\rangle\\=&-\langle E_1,15\grad h\rangle-\sum_{i=2}^{10}\langle A_0E_1,\nabla_{E_i}E_i\rangle\\=&-15|\grad h|-5h\sum_{i=2}^{10}\langle\nabla_{E_i}E_1,E_i\rangle\\=&-15|\grad h|-5h\Div E_1
\end{align*}
and Equation \eqref{eq:com2} becomes
\begin{equation}\label{eq:com3}
h(E_1|\grad h|)+h(\Div E_1)|\grad h|+|\grad h|^2=0.
\end{equation}

Now, we obtain $E_1|\grad h|=(\Hess h)(E_1,E_1)$ and 
$$
\Div E_1=-\frac{(\Hess h)(E_1,E_1)+\Delta h}{|\grad h|}
$$
and then, from \eqref{eq:com3}, it follows that
$$
-h\Delta h+|\grad h|^2=0,
$$
which is nothing but $\Delta h^2=0$. 

Next, on the $\Int(M\setminus U)$ we have $\grad h=0$ and therefore $\Delta h^2=0$. By continuity, it follows that $\Delta h^2=0$ throughout $M$, which means that $h$ is constant, i.e., $M$ is PMC. This also implies that $\nabla A_0=0$ and, therefore, that $\nabla A_H=0$, which concludes the proof.
\end{proof}

\begin{remark} 
The (compact) PMC submanifolds in $N(c)$, $c\in\{0,1\}$, with $A_H$ parallel were classified in \cite{BS,Y}.
\end{remark}

\begin{corollary}
Let $\phi:M^m\rightarrow N^{m+1}(c)$ be a compact biconservative hypersurface in a space form such that its mean curvature does not vanish at any point, $\riem^M\geq 0$, and $m\leq 10$. Then $M$ is one of the hypersurfaces in Theorem \ref{thm:main}.
\end{corollary}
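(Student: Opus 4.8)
The plan is to deduce this statement directly from Theorem~\ref{thm:pmc} together with Theorem~\ref{thm:main}, the only real content being the observation that for a hypersurface the ``nowhere vanishing mean curvature'' hypothesis is strictly stronger than the PNMC condition. Indeed, since $M^m$ sits in $N^{m+1}(c)$ as a hypersurface, its normal bundle has rank one; if the mean curvature $h=f=|H|$ is positive at every point, then $\eta_0=H/|H|$ is a smooth, globally defined unit normal vector field, hence automatically parallel in the normal bundle, $\nabla^{\perp}\eta_0=0$. Thus $M$ is a compact PNMC biconservative submanifold of the space form $N^{m+1}(c)$ with $\riem^M\geq 0$ and $m\leq 10$, so Theorem~\ref{thm:pmc} applies and gives that $M$ is PMC and $\nabla A_H=0$.

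Next I would translate these two conclusions into hypersurface language. For a hypersurface $H=f\eta$, so $A_0=A$, $A_H=fA$, and being PMC simply means $\nabla^{\perp}H=0$, i.e. $f$ is a (nonzero) constant; in particular $M$ is a non-minimal CMC hypersurface. Then $0=\nabla A_H=f\,\nabla A$ forces $\nabla A=0$. A parallel shape operator has constant principal curvatures, whence $|A|^2$ is constant, and then the Gauss equation $s=m(m-1)c+m^2f^2-|A|^2$ shows that the scalar curvature $s$ of $M$ is constant as well.

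At this point $M$ satisfies exactly the hypotheses of Theorem~\ref{thm:main}: it is a compact, non-minimal, constant-scalar-curvature biconservative hypersurface in $N^{m+1}(c)$ with $\riem^M\geq 0$. Invoking that theorem (equivalently, invoking directly the classification of hypersurfaces of space forms with $\nabla A=0$ from \cite{L,R}, which is precisely the tool used in its proof) identifies $M$ as one of the listed hypersurfaces, which is the assertion. There is no genuine obstacle here; the single point that deserves a moment's care is the very first one, recognizing that in the hypersurface setting ``$h$ nowhere zero'' makes the PNMC machinery of the previous theorem available with no extra work, after which everything is a direct specialization.
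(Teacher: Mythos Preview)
Your proposal is correct and is precisely the intended argument: the paper presents this statement as an unproved corollary of Theorem~\ref{thm:pmc}, and you have supplied exactly the missing details---checking that a hypersurface with nowhere-vanishing mean curvature is automatically PNMC, then reading off CMC and $\nabla A=0$, and finally either invoking Theorem~\ref{thm:main} or the classification in \cite{L,R} directly.
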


From the last corollary we find another partial answer to Conjecture \ref{conj2}, which is a weaker result than that of J.-H. Chen \cite{JHC}.

\begin{corollary}
Let $\phi:M^m\rightarrow \mathbb{S}^{m+1}$ be a compact proper-biharmonic hypersurface such that its mean curvature does not vanish at any point, $\riem^M\geq 0$, and $m\leq 10$. Then $M$ is either $\mathbb{S}^m(1/\sqrt{2})$ or the product $\mathbb{S}^{m_1}(1/\sqrt{2})\times\mathbb{S}^{m_2}(1/\sqrt{2})$, $m_1+m_2=m$, $m_1\neq m_2$.
\end{corollary}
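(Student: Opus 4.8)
The plan is to derive this statement from the preceding corollary on biconservative hypersurfaces, combined with the biharmonicity condition. First I would note that a proper-biharmonic immersion has $\tau_2(\phi)=0$, hence in particular $\tau_2(\phi)^{\top}=0$, so $M$ is biconservative, and, being proper-biharmonic, it is non-minimal. Since $M$ is a hypersurface whose mean curvature vanishes nowhere, all the hypotheses of the previous corollary are satisfied (compactness, biconservativity, $\riem^M\geq 0$, $m\leq 10$, mean curvature nowhere zero), so $M$ must be one of the hypersurfaces listed in Theorem \ref{thm:main} for $c=1$: either a geodesic hypersphere $\mathbb{S}^m(r)$ with $r\in(0,1)$, or a standard product $\mathbb{S}^{m_1}(r_1)\times\mathbb{S}^{m_2}(r_2)$ with $r_1^2+r_2^2=1$, $m_1+m_2=m$ and $r_1\neq\sqrt{m_1/m}$. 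In particular $M$ is CMC (indeed $\nabla A=0$).

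Next I would invoke the fact recalled above that a CMC hypersurface of $\mathbb{S}^{m+1}$ is proper-biharmonic exactly when $|A|^2=m$, and impose this on each candidate. For the totally umbilic $\mathbb{S}^m(r)$, whose single principal curvature is $\sqrt{1-r^2}/r$, the equation $m(1-r^2)/r^2=m$ forces $r=1/\sqrt{2}$, so $M=\mathbb{S}^m(1/\sqrt{2})$. For the product, whose principal curvatures are $r_2/r_1$ with multiplicity $m_1$ and $r_1/r_2$ with multiplicity $m_2$, the condition $m_1r_2^2/r_1^2+m_2r_1^2/r_2^2=m_1+m_2$ reduces, after setting $t=r_1^2/r_2^2$, to the quadratic $m_2t^2-(m_1+m_2)t+m_1=0$, whose roots are $t=1$ and $t=m_1/m_2$. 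The root $t=m_1/m_2$ together with $r_1^2+r_2^2=1$ gives $r_1=\sqrt{m_1/m}$, which is precisely the value excluded in the previous corollary; hence $t=1$, that is $r_1=r_2=1/\sqrt{2}$. Finally, the sub-case $m_1=m_2$ would make $\mathbb{S}^{m_1}(1/\sqrt{2})\times\mathbb{S}^{m_2}(1/\sqrt{2})$ the minimal Clifford hypersurface, contradicting proper-biharmonicity (equivalently, contradicting that the mean curvature is nowhere zero), so $m_1\neq m_2$; this identifies $M$ as the second hypersurface of the statement.

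I do not anticipate a serious obstacle: once the preceding corollary is in hand the argument is essentially bookkeeping. The only slightly delicate point is that the dichotomy in the product case is resolved by appealing to the constraint $r_1\neq\sqrt{m_1/m}$ coming from Theorem \ref{thm:main}, not from the biharmonic equation itself. Alternatively one can bypass the explicit radius computations entirely: Theorem \ref{thm:main} already yields $\nabla A=0$, and all proper-biharmonic hypersurfaces of $\mathbb{S}^{m+1}$ with $\nabla A=0$ were classified in \cite{CMO1,GYJ}, giving exactly $\mathbb{S}^m(1/\sqrt{2})$ and $\mathbb{S}^{m_1}(1/\sqrt{2})\times\mathbb{S}^{m_2}(1/\sqrt{2})$ with $m_1\neq m_2$.
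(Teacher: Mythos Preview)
Your proposal is correct and follows the same route the paper leaves implicit: the corollary is stated without proof immediately after the biconservative corollary, so the intended argument is exactly to observe that proper-biharmonic implies biconservative and non-minimal, apply the preceding corollary to land among the hypersurfaces of Theorem~\ref{thm:main}, and then single out the biharmonic ones. Your explicit radius computation is a valid way to finish; the alternative you mention at the end---invoking $\nabla A=0$ and the classification in \cite{CMO1,GYJ}---is precisely how the paper handles the analogous step in Proposition~\ref{p:4}, and is probably what the authors had in mind here as well. One minor remark: the excluded value $r_1=\sqrt{m_1/m}$ corresponds to the \emph{minimal} Clifford hypersurface, so it is ruled out not only by the constraint in Theorem~\ref{thm:main} but already by proper-biharmonicity; either justification works.
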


\section*{Open problems}

Our results concerning compact biconservative hypersurfaces in space forms satisfying certain additional geometric conditions raise the following natural question.

\vspace{0.5cm}

\textit{Is any compact biconservative hypersurface in a space form CMC?}

\vspace{0.5cm}

Another open problem is the following (possible) partial answer to Conjecture \ref{conj2}

\vspace{0.5cm}

\textit{The only non-minimal solutions to Equations \eqref{system} are the hypersurfaces given by Conjecture \ref{conj2}.}

\end{document}